\newtheorem{theorem}{Theorem}[section]
\newtheorem{lemma}[theorem]{Lemma}
\newtheorem{corollary}[theorem]{Corollary}
\newtheorem{proposition}[theorem]{Proposition}
\theoremstyle{definition}
\newtheorem{problem}[theorem]{Problem}
\newcommand\CB{{\mathcal B}} 
\newcommand\CC{{\mathcal C}}
\newcommand\CD{{\mathcal D}}
\newcommand\CS{{\mathcal S}}
\newcommand\BBC{{\mathbb C}}
\newcommand\BBN{{\mathbb N}}
\newcommand\TT{{T}}
\DeclareMathOperator{\ct}{ct}
\DeclareMathOperator{\End}{End} 
\DeclareMathOperator{\GL}{GL}
\DeclareMathOperator{\Hom}{Hom} 
\DeclareMathOperator{\Ind}{Ind}
\DeclareMathOperator{\Lie}{Lie}
\DeclareMathOperator{\maj}{maj} 
\DeclareMathOperator{\spn}{span}
\newcommand\inverse{^{-1}}
\renewcommand\th{{^{\text{th}}}}
\newcommand\op{^{\textrm{op}}}
\newcommand\SYT{{\mathrm SYT}}
\begin{document}

%%%%%%%%%%%%%%%%%%%%%%%%%%%%%%%%%%%%%%%%%%%%%%%%%%%%%%%%%%%%%%%%%%%%%%
%%%%%%%%%%%%% top matter stuff
%%%%%%%%%%%%%%%%%%%%%%%%%%%%%%%%%%%%%%%%%%%%%%%%%%%%%%%%%%%%%%%%%%%%%%

\title[Schur-Weyl duality and the free Lie algebra]{Schur-Weyl
  duality and the free Lie algebra} 

%%%%% author one information
\author{Stephen Doty}
\address{Department of Mathematics and Statistics\\ Loyola University
  Chi\-cago\\ Chicago, IL 60660, USA}
%\curraddr{}
\email{doty@math.luc.edu} 

\thanks{This work was partially supported by grants from the Simons
  Foundation (Grant \#245975 to S.~Doty and \#245399 to J.M.~
  Douglass). J.M.~Douglass would like to acknowledge that some of this
  material is based upon work supported by (while serving at) the National
  Science Foundation.}

%%%%% author two information
\author{J.~Matthew Douglass}
\address{Department of Mathematics\\
University of North Texas\\ 
Denton, TX 76203, USA}
%\curraddr{}
\email{douglass@unt.edu}
%\thanks{thanks - Douglass}

%    General info
\subjclass[2010]{Primary 17B01, 20G43}

\keywords{Schur-Weyl duality, Free Lie algebra}

\begin{abstract}\noindent
  We prove an analogue of Schur-Weyl duality for the space of homogeneous
  Lie polynomials of degree $r$ in $n$ variables.
\end{abstract}

\maketitle

%%%%%%%%%%%%%%%%%%%%%%%%%%%%%%%%%%%%%%%%%%%%%%%%%%%%%%%%%%%%%%%%%%%%%%
%%%%%%%%%%%%% article body...
%%%%%%%%%%%%%%%%%%%%%%%%%%%%%%%%%%%%%%%%%%%%%%%%%%%%%%%%%%%%%%%%%%%%%%

%%%%%%%%%%%%%%%%%%%%%%%%%%%%%%%%%%%%%%%%%%%%%%%%%%%%%%%%%%%%%%%%%%%%%%
%%%%%%%%%%%%%  \S1 Introduction
%%%%%%%%%%%%%%%%%%%%%%%%%%%%%%%%%%%%%%%%%%%%%%%%%%%%%%%%%%%%%%%%%%%%%%
\section{Introduction}\label{sec:1}

Let $k$ be a commutative ring and $V$ a given $k$-module. Put $E
= \End_k(V)$. The centralizer of a set $X \subseteq E$ of $k$-linear
endomorphisms of $V$ is the set
\begin{equation*}
  Z_{E}(X) = \{ f \in E \mid fx=xf, \text{ for all } x \in X\}. 
\end{equation*}
Suppose further that $V$ has a given $(A,B)$-bimodule structure, where $A$
and $B$ are $k$-algebras. Let $\overline{A}$, $\overline{B}
\subset \End_k(V)$ be the sets of $k$-linear endomorphisms of $V$ induced by
the actions of $A$ and $B$, respectively. Since the actions of $A$ and $B$
commute, we have inclusions
\begin{equation}\label{eq:SWD}
  \overline{A} \subseteq \End_B(V)\quad \text{and} \quad \overline{B}
  \subseteq \End_A(V),
\end{equation}
where $\End_A(V) = Z_E(\overline{A})$ and $\End_B(V) = Z_E(\overline{B})$.
When the inclusions in \eqref{eq:SWD} are equalities then we say that the
triple $(A,V,B)$ satisfies Schur-Weyl duality. This implies that
$\overline A$ and $\overline B$ both have the double centralizer property,
that is,
\begin{equation}\label{eq:DCP}
  Z_E(Z_E( \overline{A})) = \overline{A}\quad \text{and}\quad Z_E(Z_E(
  \overline{B})) = \overline{B}.
\end{equation}
If $\overline A$ has the double centralizer property and $\overline
B=Z_E(\overline A) = \End_A(V)$, then $\overline A=Z_E(\overline
B)= \End_A(V)$ as well, and $(A,V,B)$ satisfies Schur-Weyl
duality. But the two equalities in \eqref{eq:DCP} do not by themselves
imply that $(A,V,B)$ satisfies Schur-Weyl duality.

Assume henceforth that $k$ is a field. An important example of the duality
above is given by the $r\th$ tensor power $V = \TT^r(k^n) = (k^n)^{\otimes
  r}$ of the space $k^n$ of $n$-dimensional column vectors, regarded as an
$(A,B)$-bimodule, where $A = k\GL_n(k)$ and $B=k\Sigma_r$ are respectively
the group algebras of the general linear group $\GL_n(k)$ and symmetric
group $\Sigma_r$, with $\GL_n(k)$ acting diagonally on the left and
$\Sigma_r$ acting on the right by place permutation. To be precise, the
commuting actions are given by
\[
g\cdot (v_1\otimes \dotsm \otimes v_r) = gv_1\otimes \dotsm \otimes
gv_r\quad\text{and}\quad (v_1\otimes \dotsm \otimes v_r) \cdot \sigma =
v_{\sigma(1)}\otimes \dotsm \otimes v_{\sigma(r)}
\]
for all $v_1, \dots, v_r\in k^n$, $g\in \GL_n(k)$, and $\sigma\in \Sigma_r$.
In this setting the assertion that the triple
$(k\GL_n(k), \TT^r(k^n), k\Sigma_r)$ satisfies Schur-Weyl duality is the
classical Schur-Weyl duality between representations of $\GL_n(k)$ and
$\Sigma_r$, which is known to hold whenever $|k|>r$. (See
\cite{bensondoty:schur} for a proof.)

In this note, we investigate the analogue of classical Schur-Weyl duality
when tensor space $\TT^r(k^n)$ is replaced by its intersection $L^r(k^n)$
with the free Lie algebra $L(k^n)$ on $k^n$.

Recall that the free Lie algebra $L(k^n)$ is the Lie subalgebra of the
tensor algebra $\TT(k^n)$ generated by $k^n$, where $\TT(k^n)$ is regarded
as a Lie algebra via the Lie bracket $[a,b] = ab-ba$. Fixing a basis $X$ of
$k^n$, we may identify the tensor algebra $\TT(k^n)$ with the free
associative algebra $k\langle X\rangle$ on $X$. In this point of view,
elements of $k\langle X\rangle$ are regarded as noncommutative polynomials
in ``variables'' $X$, and polynomials in the subspace $L(k^n)$ are known as
Lie polynomials. The grading $\TT(k^n) = \bigoplus_{r \ge 0} \TT^r(k^n)$
induces a corresponding grading
\begin{equation*}
  L(k^n) = \textstyle \bigoplus_{r \ge 0} L^r(k^n), \quad \text{where}\quad
  L^r(k^n) = L(k^n) \cap \TT^r(k^n)
\end{equation*}
on $L(k^n)$. The $r$th graded component $L^r(k^n)$ in the above
decomposition is the space of homogeneous Lie polynomials of degree $r$.

Since $g\cdot [a,b] = [g\cdot a, g\cdot b]$ for any $g \in \GL_n(k)$,
$a,b \in k^n$, it is clear that $L(k^n)$ is invariant under the action of
$\GL_n(k)$, hence is a left $k\GL_n(k)$-module. It follows that the same
holds for $L^r(k^n)$. A natural problem is to describe the centralizer
algebra $\End_{\GL_n(k)}(L^r(k^n))$ as a subquotient of $k\Sigma_r$; ideally
one would like to identify a subalgebra $B_r$ of $k\Sigma_r$ which maps onto
this centralizer. Furthermore, having identified such a subalgebra $B_r$, it
is natural to ask whether the triple $(k\GL_n(k), L^r(k^n), B_r)$ satisfies
Schur-Weyl duality. When it does, we have an analogue of Schur-Weyl duality
for the module $L^r(k^n)$ of homogeneous Lie polynomials of degree $r$.

Our main results explain how to identify the appropriate subalgebra
$B_r$ and establish that, indeed, $(k\GL_n(k), L^r(k^n), B_r)$ satisfies
Schur-Weyl duality, provided only that the characteristic of $k$ is strictly
larger than $r$. (We agree that characteristic zero is infinite and hence
always larger than any $r$.)  Furthermore, under our assumption on the
characteristic, it is well-known that an idempotent $e \in k\Sigma_r$ exists
such that $L^r(k^n) = \TT^r(k^n)e$. Then $B_r$ may be taken to be the
subalgebra $ek\Sigma_r e$ of $k\Sigma_r$. The more general question, of
whether Schur-Weyl duality holds whenever $|k|>r$ and the characteristic of
$k$ does not divide $r$, remains open.

The paper is organized as follows. In \S\ref{sec:2} we state the main
theorem, Theorem~\ref{thm:1}, show that the triple
$(k\GL_n(k), L^{r}(k^n), ek\Sigma_re)$ satisfies Schur-Weyl duality when the
characteristic of $k$ is strictly larger than $r$, and draw some general
conclusions.

The proof of Theorem~\ref{thm:1} is given in \S\ref{sec:3} as an application
of more general results. We consider a triple $(A,V,B)$ that satisfies
Schur-Weyl duality and an idempotent $e\in B$, and we ask when
$(A, Ve, eBe)$ satisfies Schur-Weyl duality. It turns out that
$\overline {eBe}$ is always equal to $\End_A(Ve)$. The equality
$\overline A= \End_{eBe}(Ve)$ seems to be a more delicate question. We show
that if $V$ is a completely reducible $A$-module whose irreducible
constituents are absolutely irreducible, then in fact $(A, Ve, eBe)$
satisfies Schur-Weyl duality.

In the case of classical Schur-Weyl duality, $L^{r}(k^n)$ is a tilting
module and it is tempting to try to use the theory of tilting modules and a
$p$-modular system to derive results in positive characteristic from known
results in characteristic zero. This approach can be used to yield a uniform
proof in all characteristics of some of the known properties of the algebra
$ek\Sigma_re$, but we have been unable to show that
$(k\GL_n(k), L^{r}(k^n), ek\Sigma_re)$ satisfies Schur-Weyl duality in
general.

Finally, in \S\ref{sec:4} we describe the commuting algebra $ek\Sigma_r e$
in the favorable case when the characteristic of $k$ is strictly larger than
$r$, and we show that in general, whether
$(k\GL_n(k), L^{r}(k^n), ek\Sigma_re)$ satisfies Schur-Weyl duality may be
reduced to a statement about permutation representations of $\Sigma_r$
arising from Young subgroups.

%%%%%%%%%%%%%%%%%%%%%%%%%%%%%%%%%%%%%%%%%%%%%%%%%%%%%%%%%%%%%%%%%%%%%%
%%%%%%%%%%%%%  \S2 Notation & Main Results
%%%%%%%%%%%%%%%%%%%%%%%%%%%%%%%%%%%%%%%%%%%%%%%%%%%%%%%%%%%%%%%%%%%%%%

\section{Notation and Main Results}\label{sec:2}

In this section we establish our basic notation and formulate the
main results.

Recall that $k$ denotes a field. Throughout the paper, we set $T^{n,r} =
\TT^r(k^n)$ and $L^{n,r} = L^r(k^n)$ for ease of notation, and we denote by
$\Phi$ and $\Psi$ the $k$-algebra homomorphisms
\begin{equation*}
  \xymatrix{k\GL_n(k) \ar[r]^-{\Phi} &\End_k(T^{n,r})& k\Sigma_r
    \ar[l]_-{\Psi}}
\end{equation*}
induced by the commuting actions of $\GL_n(k)$ and $\Sigma_r$ described in
the introduction.  Note that because $\Sigma_r$ acts on the right, the
homomorphism $\Psi$ is given by $\Psi(\sigma)(x)= x \cdot \sigma\inverse$
for $\sigma\in \Sigma_r$ and $x\in T^{n,r}$.  Then classical Schur-Weyl
duality is the pair of equalities
\begin{equation}
  \label{eq:classicalSWD}
  \Phi(k\GL_n(k)) = \End_{\Sigma_r}(T^{n,r})\quad \text{and} \quad
  \Psi(k\Sigma_r) = \End_{\GL_n(k)}(T^{n,r}) .
\end{equation}

It will be convenient to define a Lie idempotent to be any idempotent $e$ in
$k\Sigma_r$ such that $L^{n,r}= T^{n,r} \cdot e$. This agrees with the usual
definition (see e.g.~\cite[\S8.4]{reutenauer:free}) when $n\geq r$. Lie
idempotents exist whenever the characteristic of $k$ does not divide
$r$. Proofs of the following well-known result may be found in
\cite[\S2]{garsia:combinatorics} and \cite[\S8.4]{reutenauer:free}.

\begin{lemma}
  Assume that the characteristic of $k$ does not divide $r$.  Then the
  Dynkin-Specht-Wever idempotent $e_r= \frac 1r (1-\gamma_2) \dotsm
  (1-\gamma_r)$, where for $2\leq i\leq r$, $\gamma_i$ is the descending
  $i$-cycle $(i\ \dotsm \ 2\ 1)$ in $\Sigma_r$, is a Lie idempotent.
\end{lemma}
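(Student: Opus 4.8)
The plan is to verify directly that $e_r^2 = e_r$ and that $T^{n,r}\cdot e_r = L^{n,r}$, the second being the content of the classical Dynkin–Specht–Wever theorem. For the idempotency, I would first recall the \emph{left normed bracketing} operator $\theta\colon T^{n,r}\to T^{n,r}$ defined on monomials by $\theta(v_1\otimes\cdots\otimes v_r) = [[\cdots[[v_1,v_2],v_3],\dots],v_r]$, and observe that, after expanding the bracket and tracking how indices get permuted, $\theta$ coincides with right multiplication by the element $r\,e_r = (1-\gamma_2)\cdots(1-\gamma_r)$ of $k\Sigma_r$ (taking into account the convention $\Psi(\sigma)(x) = x\cdot\sigma^{-1}$ only matters for how one phrases it; as an element of the group algebra acting on the right there is nothing to invert). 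The Dynkin–Specht–Wever theorem then says precisely that $\theta$ restricted to $L^{n,r}$ is multiplication by $r$, i.e. $w\cdot(re_r) = r\,w$ for every Lie polynomial $w$ of degree $r$, provided $r$ is invertible in $k$.

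Granting that, idempotency is almost immediate: $\theta(T^{n,r})\subseteq L^{n,r}$ because an iterated bracket of elements of $k^n$ is a Lie polynomial, so $T^{n,r}\cdot(re_r) \subseteq L^{n,r}$; and conversely $L^{n,r}$ is spanned by left normed brackets (this is standard — e.g.\ a consequence of the Jacobi identity, reducing any bracket monomial to left normed form), so on the other hand $L^{n,r} = \theta(T^{n,r}) = T^{n,r}\cdot(re_r)$, whence $L^{n,r} = T^{n,r}\cdot e_r$ since $r$ is invertible. Now apply the Dynkin–Specht–Wever identity to an element $w\in L^{n,r}$ of the form $w = x\cdot(re_r)$ for $x\in T^{n,r}$: we get $x\cdot(re_r)^2 = (x\cdot re_r)\cdot re_r = \theta(\theta(x)) = r\cdot\theta(x) = r\,(x\cdot re_r)$, because $\theta(x)\in L^{n,r}$. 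Since this holds for all $x\in T^{n,r}$ and $\Psi$ is injective enough on the relevant quotient — more carefully, since the map $k\Sigma_r\to\End_k(T^{n,r})$ is injective when $|k|>r$, and in characteristic $>r$ this holds — we conclude $(re_r)^2 = r\cdot(re_r)$ in $k\Sigma_r$, that is, $e_r^2 = e_r$.

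To package this cleanly I would structure the proof as: (i) identify $\Psi^{-1}$ applied to $\theta$, or rather identify the group-algebra element $re_r$ with the left normed bracketing operator; (ii) cite or sketch that $L^{n,r}$ is the image of $\theta$ and is spanned by left normed brackets; (iii) quote Dynkin–Specht–Wever for the eigenvalue statement $\theta|_{L^{n,r}} = r\cdot\mathrm{id}$; (iv) combine to get both $T^{n,r}\cdot e_r = L^{n,r}$ and $e_r^2 = e_r$, using faithfulness of $k\Sigma_r$ on $T^{n,r}$ for $n\ge r$ (or invoking that we only need the statement about the operator, which is all that is used later anyway). Since the lemma is attributed to \cite[\S2]{garsia:combinatorics} and \cite[\S8.4]{reutenauer:free}, the honest move is to simply cite those sources; but if a self-contained argument is wanted, the skeleton above is the standard one.

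The main obstacle — such as it is — is the bookkeeping in step (i): one must expand $[[\cdots[v_1,v_2],\dots],v_r]$, see that each of the $2^{r-1}$ terms is $\pm$ a permutation of the tensor factors, and match the signs and permutations against the product $(1-\gamma_2)\cdots(1-\gamma_r)$; the telescoping of the descending cycles $\gamma_i$ is what makes this work, and getting the direction of the cycles and the side of the action consistent with the paper's convention $\Psi(\sigma)(x)=x\cdot\sigma^{-1}$ requires care but is routine. The representation-theoretic input (faithfulness of $k\Sigma_r$ acting on $(k^n)^{\otimes r}$ for $n\ge r$) is elementary and the characteristic hypothesis is used only to invert $r$.
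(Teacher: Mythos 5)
Your proposal is correct and is exactly the standard Dynkin--Specht--Wever argument that the paper delegates to its citations of Garsia and Reutenauer: identify $re_r$ with the left-normed bracketing operator $\theta$, use $\theta(T^{n,r})\subseteq L^{n,r}$, the spanning of $L^{n,r}$ by left-normed brackets, and $\theta|_{L^{n,r}}=r\cdot\mathrm{id}$ to get both $T^{n,r}e_r=L^{n,r}$ and $e_r^2=e_r$. One small simplification: faithfulness of the right $k\Sigma_r$-action on $T^{n,r}$ for $n\geq r$ holds over any field (the orbit of $v_1\otimes\dotsm\otimes v_r$ is a linearly independent free orbit), so the aside about $|k|>r$ is unnecessary.
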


Suppose that $e$ is a Lie idempotent.  Then $ek\Sigma_r e$ acts on $L^{n,r}$
on the right and $\GL_n(k)$ acts on $L^{n,r}$ on the left. Thus there are
$k$-algebra homomorphisms
\begin{equation*}
  \xymatrix{k\GL_n(k) \ar[r]^-{\Phi_e} &\End_k(L^{n,r})& e k\Sigma_r
    e\ar[l]_-{\Psi_e}}
\end{equation*}
such that the images of $\Phi_e$ and $\Psi_e$ commute, so $L^{n,r}$ is a
$(k\GL_n(k), ek\Sigma_r e)$-bimodule.  Our main theorem is the following
analogue of classical Schur-Weyl duality for this bimodule. The theorem is
proved in~\S\ref{sec:3}.

\begin{theorem}\label{thm:1}
  Suppose that $k$ is a field of cardinality strictly larger than $r$ such
  that the characteristic of $k$ does not divide $r$, and that $e$ is a Lie
  idempotent in $k\Sigma_r$. Then
  \[
  \Psi_e(ek\Sigma_r e)= \End_{\GL_n(k)}(L^{n,r}).
  \]
  If in addition $T^{n,r}$ is a direct sum of absolutely irreducible
  $k\GL_n(k)$-modules, then
  \[
  \Phi_e(k\GL_n(k)) = \End_{ek\Sigma_r e}(L^{n,r}).
  \]
\end{theorem}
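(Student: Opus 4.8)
The plan is to deduce Theorem~\ref{thm:1} from classical Schur-Weyl duality \eqref{eq:classicalSWD} by a general reduction principle: start from a triple $(A,V,B)$ satisfying Schur-Weyl duality, pick an idempotent $e \in B$, and study the triple $(A, Ve, eBe)$. Here $A = k\GL_n(k)$, $B = k\Sigma_r$, $V = T^{n,r}$, and $e$ is the Lie idempotent, so that $Ve = T^{n,r}e = L^{n,r}$ and $eBe = ek\Sigma_re$. So I would first isolate and prove the abstract statement: if $(A,V,B)$ satisfies Schur-Weyl duality and $e \in B$ is idempotent, then $\overline{eBe} = \End_A(Ve)$ always, and if moreover $V$ is a completely reducible $A$-module all of whose constituents are absolutely irreducible, then $\overline A = \End_{eBe}(Ve)$ as well.

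For the first equality, $\overline{eBe} = \End_A(Ve)$, the key observation is that $e$, viewed in $\End_k(V)$ via the $B$-action, is an $A$-module endomorphism of $V$ (since $A$ and $B$ commute), and it is idempotent, so $V = Ve \oplus V(1-e)$ as $A$-modules and $Ve$ is a direct summand. Thus $\End_A(Ve)$ is naturally identified with $e'\End_A(V)e'$, where $e'$ is the image of $e$ in $\End_k(V)$; under the classical duality $\End_A(V) = \overline B = \Psi(B)$, and chasing through the place-permutation/inverse conventions this $e'\Psi(B)e'$ is exactly $\Psi_e(eBe) = \overline{eBe}$. The only care needed is to track the anti-automorphism implicit in the right action of $\Sigma_r$, i.e.\ that $\Psi$ is really a homomorphism because of the inverse in $\Psi(\sigma)(x) = x\cdot\sigma^{-1}$; since $e_r$ may be replaced by a symmetric idempotent this is harmless, but I would state it carefully. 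This gives the first displayed equality in the theorem with no hypothesis beyond $e$ being a Lie idempotent and classical duality holding, i.e.\ $|k| > r$.

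For the second equality I would argue as follows. We always have $\overline A \subseteq \End_{eBe}(Ve)$ from \eqref{eq:SWD}. For the reverse inclusion, write $V = \bigoplus_\lambda V_\lambda^{\oplus m_\lambda}$ with the $V_\lambda$ pairwise non-isomorphic absolutely irreducible $A$-modules; then $\End_A(V) = \bigoplus_\lambda M_{m_\lambda}(k)$, and by classical duality this matrix algebra is $\overline B$. The idempotent $e \in \overline B$ then corresponds to a tuple of idempotent matrices $e_\lambda \in M_{m_\lambda}(k)$, and $Ve = \bigoplus_\lambda V_\lambda \otimes (k^{m_\lambda}e_\lambda)$, so the multiplicity space of $V_\lambda$ in $Ve$ has dimension $\operatorname{rank} e_\lambda$, which may be $0$ for some $\lambda$. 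Now $eBe = e\overline Be$ acts on $Ve$ through its action on the multiplicity spaces $k^{m_\lambda}e_\lambda$, and this action is the full matrix algebra $\End_k(k^{m_\lambda}e_\lambda) = e_\lambda M_{m_\lambda}(k)e_\lambda$ (because $\overline B$ already acted as the full $M_{m_\lambda}(k)$). Therefore $\End_{eBe}(Ve) = \bigoplus_{\lambda : e_\lambda \ne 0} \End_k(V_\lambda)$, where we used absolute irreducibility to say $\End_A(V_\lambda) = k$. But this is precisely the image of $A$ acting on $Ve$: $A$ surjects onto $\bigoplus_{\lambda} \End_k(V_\lambda)$ by absolute irreducibility (Jacobson density / Burnside), and the summands with $e_\lambda = 0$ act as zero on $Ve$, so $\overline A|_{Ve} = \bigoplus_{\lambda : e_\lambda \ne 0}\End_k(V_\lambda) = \End_{eBe}(Ve)$. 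Combining, $(A, Ve, eBe)$ satisfies Schur-Weyl duality.

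To finish, I would apply this with the specific data: classical Schur-Weyl duality holds since $|k| > r$, so the first equality $\Psi_e(ek\Sigma_re) = \End_{\GL_n(k)}(L^{n,r})$ follows immediately; and when $T^{n,r}$ is in addition a direct sum of absolutely irreducible $k\GL_n(k)$-modules — which the theorem hypothesizes — the second equality $\Phi_e(k\GL_n(k)) = \End_{ek\Sigma_re}(L^{n,r})$ follows from the abstract result. The step I expect to be the main obstacle is the second equality in the abstract lemma: one must be sure that restricting the $A$-action to the summand $Ve$ does not shrink the image below $\End_{eBe}(Ve)$, and the clean way to see this is exactly the multiplicity-space bookkeeping above, which genuinely uses absolute irreducibility (over a non-algebraically-closed $k$ one could otherwise have $\End_A(V_\lambda)$ a division ring larger than $k$, breaking the count). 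Everything else — the summand decomposition $V = Ve \oplus V(1-e)$, the identification $\End_A(Ve) = e'\End_A(V)e'$, and the translation between $\overline B$, $\Psi$ and the corner algebra $eBe$ — is routine double-centralizer algebra once the conventions are pinned down.
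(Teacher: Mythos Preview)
Your proposal is correct and follows essentially the same route as the paper: the first equality is exactly Lemma~\ref{lem:1} (using the $A$-module splitting $V=Ve\oplus V(1-e)$ to identify $\End_A(Ve)$ with the corner $\pi\,\End_A(V)\,\pi$ and then invoking classical Schur--Weyl duality), and the second equality is the content of Theorem~\ref{thm:2}. The only cosmetic difference is that for the second equality the paper re-applies the abstract double centralizer theorem (Theorem~\ref{thm:dct}) to $Ve$, whereas you unroll that application by computing $\End_{eBe}(Ve)$ directly from the multiplicity-space decomposition; your concern about the anti-automorphism and symmetric idempotents is unnecessary, since $\Psi(e)$ is idempotent for any idempotent $e$ and only the \emph{image} $\Psi_e(eBe)$ matters.
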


Recall that the Schur algebra over $k$ is the algebra 
\[
\CS(n,r) = \Phi(k\GL_n(k)) = \End_{\Sigma_r}(T^{n,r})
\]
appearing in \eqref{eq:classicalSWD}.  Suppose that the characteristic of
$k$ is larger than $r$, so $|k|>r$ as well. It is well-known that in this
case $k\Sigma_r$ is a split, semisimple $k$-algebra (see
\cite{james:irreducible}) and so $\Psi(k\Sigma_r)$ is a split, semisimple
$k$-algebra. By classical Schur-Weyl duality the centralizer of
$\Psi(k\Sigma_r)$ in $\End_k(T^{n,r})$ is $\CS(n,r)$. It is easy to see that
the centralizer of a split, semisimple subalgebra of $\End_k(T^{n,r})$ is
again split, semisimple (see \S\ref{ssec:ss1}), so $\CS(n,r)$ is a split,
semisimple $k$-algebra. Thus, $T^{n,r}$ is a direct sum of absolutely
irreducible $k\GL_n(k)$-modules and so both equalities in the theorem hold.

\begin{corollary}\label{cor:main}
  The triple $(k\GL_n(k), L^{n,r}, ek\Sigma_r e)$ satisfies Schur-Weyl
  duality whenever the characteristic of $k$ is strictly larger than $r$.
\end{corollary}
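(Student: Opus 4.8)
The plan is to deduce the corollary from Theorem~\ref{thm:1} by checking the one remaining hypothesis: that $T^{n,r}$ decomposes as a direct sum of absolutely irreducible $k\GL_n(k)$-modules when the characteristic of $k$ exceeds $r$. Since the characteristic is then strictly larger than $r$, we have $|k| > r$, so classical Schur-Weyl duality~\eqref{eq:classicalSWD} is available; in particular $\CS(n,r) = \Phi(k\GL_n(k)) = \End_{\Sigma_r}(T^{n,r}) = Z_E(\Psi(k\Sigma_r))$ in $E = \End_k(T^{n,r})$. So everything reduces to the semisimplicity half of the argument. First I would recall that $k\Sigma_r$ is split semisimple when $\operatorname{char} k > r$ (Maschke plus the fact that the ordinary irreducibles of $\Sigma_r$ are defined over the prime field, hence over $k$); consequently its image $\Psi(k\Sigma_r) \subseteq E$ is a split semisimple subalgebra. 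Next I would invoke the general fact, to be established in \S\ref{ssec:ss1}, that the centralizer in $\End_k(V)$ of a split semisimple subalgebra is again split semisimple: writing the split semisimple subalgebra as $\prod_i M_{d_i}(k)$ acting on $V$, one decomposes $V = \bigoplus_i (k^{d_i} \otimes_k W_i)$ as a bimodule, and the centralizer is visibly $\prod_i M_{\dim W_i}(k)$, which is split semisimple. Applying this with $V = T^{n,r}$ gives that $\CS(n,r) = Z_E(\Psi(k\Sigma_r))$ is split semisimple.

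A split semisimple algebra acts on any module as a direct sum of absolutely irreducible modules: each simple factor $M_d(k)$ contributes copies of its unique simple module $k^d$, which is absolutely irreducible since $\End_{M_d(k)}(k^d) = k$. Hence $T^{n,r}$, viewed as a module for $\CS(n,r) = \Phi(k\GL_n(k))$ and therefore as a $k\GL_n(k)$-module, is a direct sum of absolutely irreducible modules. This is exactly the extra hypothesis appearing in the second assertion of Theorem~\ref{thm:1}, so both conclusions of that theorem apply: $\Psi_e(ek\Sigma_r e) = \End_{\GL_n(k)}(L^{n,r})$ and $\Phi_e(k\GL_n(k)) = \End_{ek\Sigma_r e}(L^{n,r})$. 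These two equalities say precisely that the triple $(k\GL_n(k), L^{n,r}, ek\Sigma_r e)$ satisfies Schur-Weyl duality.

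I do not anticipate a serious obstacle here, since the corollary is essentially a bookkeeping consequence of Theorem~\ref{thm:1} together with standard semisimplicity facts; the only point requiring care is making sure the hypotheses of Theorem~\ref{thm:1} are all in force, namely that $\operatorname{char} k > r$ forces both $|k| > r$ and $\operatorname{char} k \nmid r$, and that the existence of a Lie idempotent $e$ (e.g.\ the Dynkin-Specht-Wever idempotent of the Lemma) is guaranteed under this hypothesis. The genuinely substantive work has already been localized in Theorem~\ref{thm:1} and the general results of \S\ref{sec:3}; the present deduction is the easy capstone.
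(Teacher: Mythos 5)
Your proposal is correct and follows essentially the same route as the paper: deduce from $\operatorname{char} k > r$ that $k\Sigma_r$ (hence $\Psi(k\Sigma_r)$) is split semisimple, use classical Schur-Weyl duality to identify $\CS(n,r)$ as its centralizer, invoke the fact that the centralizer of a split semisimple subalgebra of $\End_k(T^{n,r})$ is again split semisimple, conclude that $T^{n,r}$ is a direct sum of absolutely irreducible $k\GL_n(k)$-modules, and apply both parts of Theorem~\ref{thm:1}. The only cosmetic difference is that you sketch the centralizer fact directly rather than deferring to \S\ref{sec:3}, and you justify splitness of $k\Sigma_r$ via Maschke and rationality rather than citing James.
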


Since classical Schur-Weyl duality is known to hold whenever $|k|>r$ (see
\cite{bensondoty:schur}), it is natural to ask whether $(k\GL_n(k), L^{n,r},
ek\Sigma_r e)$ satisfies Schur-Weyl duality whenever $|k|>r$ and the
characteristic of $k$ does not divide $r$.  Based on small rank examples, it
seems likely that this is indeed the case. As a step in this direction,
in~\S\ref{sec:4} it is shown that the second equality in the theorem is
equivalent to a statement about intertwining operators between certain
transitive permutation representations arising from Young subgroups of
$\Sigma_r$.

Assume for a moment that $n\geq r$. Then $T^{n,r}$ is a faithful
$k\Sigma_r$-module and the Schur functor
$\mathfrak{f}\colon M \mapsto \epsilon M$ from left $\CS(n,r)$-modules to
left $k\Sigma_r$-modules may be defined, where $\epsilon\in \CS(n,r)$ is an
idempotent that projects $T^{n,r}$ onto its $(1^r, 0^{n-r})$-weight
space. By \cite[(6.3d)]{green:polynomial} we have that
$\mathfrak{f}(T^{n,r}) = \epsilon T^{n,r}$ is isomorphic to the left regular
$k\Sigma_r$-module ${}_{k\Sigma_r}k\Sigma_r$ and so \eqref{eq:classicalSWD}
takes the form
\begin{equation}\label{eq:7}
  \CS(n,r) = \End_{k\Sigma_r} (T^{n,r})  
\end{equation}
and
\begin{equation}\label{eq:8}
  \End_{k\Sigma_r} (\mathfrak{f}(T^{n,r}))= \End_{ k\Sigma_r}
  ({}_{k\Sigma_r}k\Sigma_r) \cong \End_{\CS(n,r)} (T^{n,r}) . 
\end{equation} 

Assume further that the characteristic of $k$ does not divide $r$, so that a
Lie idempotent $e$ exists. Because $L^{n,r} = T^{n,r}e$ is a
$GL_n(k)$-stable subspace of $T^{n,r}$, it has a natural $\CS(n,r)$-module
structure. The Lie module,
\begin{equation*}
  \Lie(r) = \mathfrak{f}(L^{n,r}),
\end{equation*}
is the $k\Sigma_r$-module obtained by applying the Schur functor to
$L^{n,r}$. Then
\[
\mathfrak{f}(L^{n,r}) = \epsilon L^{n,r} = \epsilon(T^{n,r} e) = (\epsilon
T^{n,r}) e \cong k\Sigma_r e,
\]
and so the Lie module is isomorphic to the left $k\Sigma_r$-module
$k\Sigma_r e$. Let $\overline{\CS(n,r)} = \Phi_e(k \GL_n(k))$. If
$\varphi\in \CS(n,r)$, then $\varphi$ is $\Sigma_r$-equivariant and so
$\varphi(L^{n,r}) \subseteq L^{n,r}$. Hence restriction defines an algebra
homomorphism from $\CS(n,r)$ to $\End_{ek\Sigma_r e} (L^{n,r})$ with image
equal to $\overline{\CS(n,r)}$. With this notation, Theorem~\ref{thm:1}
immediately implies the following analogue of classical Schur-Weyl duality
expressed by \eqref{eq:7} and \eqref{eq:8} above, with the $r\th$ graded
piece of the free associative algebra on $n$ letters replaced by the $r\th$
graded piece of the free Lie algebra on $n$ letters, and with the left
regular $k\Sigma_r$-module replaced by the $r\th$ Lie module.

\begin{corollary}
  Suppose that $|k|>r$, the characteristic of $k$ does not divide $r$, and
  $n\geq r$. Then
  \begin{equation*}
    \overline{\CS(n,r)} = \End_{e k\Sigma_r e} (L^{n,r})  
  \end{equation*}
  and
  \begin{equation*}
    \End_{e k\Sigma_r e} (\mathfrak{f}(L^{n,r}))= \End_{e k\Sigma_r e}
    (\Lie(r)) \cong \End_{\overline{\CS(n,r)}} (L^{n,r}) ,  
  \end{equation*} 
  where $e$ is any Lie idempotent.
\end{corollary}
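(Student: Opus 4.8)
The plan is to read the corollary off Theorem~\ref{thm:1}, unwinding the Schur-functor constructions recorded just before the statement; beyond Theorem~\ref{thm:1} there is essentially no new content, only a matching up of module structures. I will use throughout that $n\geq r$: this makes the place-permutation action of $k\Sigma_r$ on $T^{n,r}$ faithful, makes the Schur functor $\mathfrak{f}$ available with $\epsilon T^{n,r}\cong{}_{k\Sigma_r}k\Sigma_r$, and gives $\mathfrak{f}(L^{n,r})=\epsilon L^{n,r}=(\epsilon T^{n,r})e\cong k\Sigma_r e$ as left $k\Sigma_r$-modules, exactly as recalled in the paragraph preceding the corollary.

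First I would dispose of the first displayed equality. By construction $\overline{\CS(n,r)}=\Phi_e(k\GL_n(k))$: restriction to the $\GL_n(k)$-stable subspace $L^{n,r}$ carries $\CS(n,r)=\Phi(k\GL_n(k))$ onto exactly the image of the $\GL_n(k)$-action on $L^{n,r}$. Since $T^{n,r}$ is a direct sum of absolutely irreducible $k\GL_n(k)$-modules in the situation at hand, the second equality of Theorem~\ref{thm:1} applies and gives $\Phi_e(k\GL_n(k))=\End_{ek\Sigma_r e}(L^{n,r})$; combining the two identifications yields $\overline{\CS(n,r)}=\End_{ek\Sigma_r e}(L^{n,r})$.

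For the second display, the equality $\End_{ek\Sigma_r e}(\mathfrak{f}(L^{n,r}))=\End_{ek\Sigma_r e}(\Lie(r))$ is merely the definition $\Lie(r)=\mathfrak{f}(L^{n,r})$, so it remains to establish the isomorphism with $\End_{\overline{\CS(n,r)}}(L^{n,r})$, and I would do this by identifying both sides with a copy of $ek\Sigma_r e$. On the right: because $\overline{\CS(n,r)}$ is the image of the $\GL_n(k)$-action on $L^{n,r}$, one has $\End_{\overline{\CS(n,r)}}(L^{n,r})=\End_{\GL_n(k)}(L^{n,r})$, which by the first equality of Theorem~\ref{thm:1} equals $\Psi_e(ek\Sigma_r e)$; since $n\geq r$ forces $\Psi_e$ to be injective, this is isomorphic to $ek\Sigma_r e$. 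On the left: regarding $\Lie(r)\cong k\Sigma_r e$ with its left $k\Sigma_r$-module structure (as in the classical identity \eqref{eq:8}), the elementary fact that, for a $k$-algebra $R$ and an idempotent $\varepsilon$, the evaluation $f\mapsto f(\varepsilon)$ identifies $\End_R(R\varepsilon)$ with $(\varepsilon R\varepsilon)\op$ shows the left-hand side is isomorphic to $(ek\Sigma_r e)\op$; and $(ek\Sigma_r e)\op\cong ek\Sigma_r e$, for instance because $k\Sigma_r$, hence $ek\Sigma_r e$, is split semisimple in the present setting, or more directly because the anti-automorphism $\sigma\mapsto\sigma\inverse$ of $k\Sigma_r$ carries a Lie idempotent to a Lie idempotent so that all corners $e'k\Sigma_r e'$ at Lie idempotents are mutually isomorphic. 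Chaining the two computations gives the asserted isomorphism.

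The step I expect to need the most care is purely organizational: keeping track of which of $k\Sigma_r$ and $ek\Sigma_r e$ acts on the left and which on the right of each of $T^{n,r}$, $L^{n,r}$, $\epsilon T^{n,r}$, and $\Lie(r)$, together with the anti-automorphism built into the definition of $\Psi$, so that the identifications above compose correctly and the two endomorphism algebras in the second display come out genuinely isomorphic rather than merely anti-isomorphic. The only substantive input not already contained in Theorem~\ref{thm:1} is the injectivity of $\Psi_e$, which is exactly where the hypothesis $n\geq r$ is used; granting that, the corollary is a direct transcription of the two equalities of Theorem~\ref{thm:1}.
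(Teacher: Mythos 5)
Your overall route is the one the paper intends: the corollary is meant to be read off Theorem~\ref{thm:1} together with the identification $\mathfrak{f}(L^{n,r})\cong k\Sigma_re$ recorded just before it, and your treatment of the second display (reduce the right-hand side to $\Psi_e(ek\Sigma_re)$ via the first equality of Theorem~\ref{thm:1} and the injectivity of $\Psi_e$ coming from $n\geq r$, and compute the left-hand side as a corner algebra by evaluation at $e$) is exactly the intended unwinding. However, there is a genuine gap in your derivation of the first display. You assert that ``$T^{n,r}$ is a direct sum of absolutely irreducible $k\GL_n(k)$-modules in the situation at hand.'' The stated hypotheses only require that the characteristic of $k$ not divide $r$, not that it exceed $r$, and the assertion fails whenever $0<\operatorname{char}k\leq r$: for $n\geq r$ classical Schur--Weyl duality and faithfulness give $\End_{\GL_n(k)}(T^{n,r})=\Psi(k\Sigma_r)\cong k\Sigma_r$, and the endomorphism algebra of a completely reducible module with absolutely irreducible constituents is split semisimple, whereas $k\Sigma_r$ is semisimple only when $\operatorname{char}k=0$ or $\operatorname{char}k>r$ (e.g.\ $k=\overline{\BBF}_2$, $n=r=3$ satisfies all the stated hypotheses but $T^{3,3}$ is not completely reducible). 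Complete reducibility with absolutely irreducible constituents is precisely the extra hypothesis in the second half of Theorem~\ref{thm:1}, and whether $\Phi_e(k\GL_n(k))=\End_{ek\Sigma_re}(L^{n,r})$ holds without it is exactly the open question the paper emphasizes in \S\ref{sec:1} and \S\ref{sec:4}. So the first equality of the corollary cannot be obtained your way unless one either strengthens the hypothesis to $\operatorname{char}k>r$ (so that Corollary~\ref{cor:main} applies) or carries along Theorem~\ref{thm:1}'s conditional clause; the paper's own statement is admittedly loose on this point, but your proof is where the unjustified step actually occurs.

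The same over-assumption reappears in your proof of the second display: ``$k\Sigma_r$, hence $ek\Sigma_re$, is split semisimple in the present setting'' fails in the same examples, and the backup claim that $\sigma\mapsto\sigma\inverse$ carries Lie idempotents to Lie idempotents would itself need proof. Fortunately neither is needed. With the paper's conventions $\Psi_e$ is a homomorphism on $(ek\Sigma_re)\op$, so its injectivity gives $\End_{\overline{\CS(n,r)}}(L^{n,r})=\End_{\GL_n(k)}(L^{n,r})=\Psi_e(ek\Sigma_re)\cong(ek\Sigma_re)\op$, while your evaluation-at-$e$ computation gives $\End_{k\Sigma_r}(k\Sigma_re)\cong(ek\Sigma_re)\op$ as well; the two sides are then isomorphic with no appeal to semisimplicity and no need for $(ek\Sigma_re)\op\cong ek\Sigma_re$. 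Reorganized this way, the second display uses only the first equality of Theorem~\ref{thm:1} and faithfulness of $T^{n,r}$ over $k\Sigma_r$, both of which do hold under the stated hypotheses, so that part of the corollary is sound.
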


Returning to the case of general $n$ and $r$, suppose the field $k$ contains
a primitive $r\th$ root of unity $\zeta$. Then the right ideal $e k\Sigma_r$
and the algebra $e k\Sigma_r e$ arise in a surprisingly different
context. To describe this connection further, fix an $r$-cycle $\gamma$ in
$\Sigma_r$ and let $\Gamma= \langle \gamma \rangle$. Let $f= (1/r)
\sum_{i=1}^r \zeta^{-i} \gamma^i$. Then $f$ is the primitive idempotent in
$k\Gamma$ corresponding to a faithful character of $\Gamma$. The right ideal
$fk\Sigma_r$ of $k\Sigma_r$ affords the induced representation
$\Ind_\Gamma^{\Sigma_r} \zeta$, and the subalgebra $f k\Sigma_r f$ is
isomorphic to the endomorphism algebra of the induced module
$\Ind_\Gamma^{\Sigma_r} \zeta$. There is a Lie idempotent $\kappa$, the
Klyachko idempotent (see \S\ref{sec:4}), such that $e\kappa =e$, $\kappa
f=f$, and $f\kappa =\kappa$. It follows that
\[
e k\Sigma_r \cong \kappa k\Sigma_r = f k\Sigma_r \quad \text{and so} \quad e
k\Sigma_r e \cong f k\Sigma_r f \cong \End_{k\Sigma_r}(
\Ind_\Gamma^{\Sigma_r} \zeta).
\]

On the other hand, suppose that $k=\BBC$ and let $M$ denote the subset of
$\BBC^n$ consisting of vectors with distinct coordinates. Then $M$ is the
complement of the union of the hyperplanes in the braid arrangement on $r$
strands. Arnold \cite{arnold:cohomology} has described the cohomology ring
$H^*(M)$. The group $\Sigma_r$ acts on $M$ by permuting the coordinates and
hence acts on the cohomology spaces $H^p(M)$. Lehrer and Solomon
\cite{lehrersolomon:symmetric} have described these representations of
$\Sigma_r$ as direct sums of representations induced from linear characters
of centralizers. A special case is the $r$-cycle $\gamma$ and its
centralizer $\Gamma$. In this case, it follows from the results in
\cite[\S5]{douglasspfeifferroehrle:cohomology} that the representation of
$\Sigma_r$ afforded by the highest non-vanishing cohomology space
$H^{r-1}(M)$ is isomorphic to the representation afforded by
$\BBC_{\operatorname{sgn}} \otimes f \BBC \Sigma_r \cong
\BBC_{\operatorname{sgn}} \otimes e \BBC \Sigma_r$,
where $\BBC_{\operatorname{sgn}}$ denotes the sign representation of
$\Sigma_r$.

%%%%%%%%%%%%%%%%%%%%%%%%%%%%%%%%%%%%%%%%%%%%%%%%%%%%%%%%%%%%%%%%%%%%%%
%%%%%%%%%%%%% \S3 
%%%%%%%%%%%%%%%%%%%%%%%%%%%%%%%%%%%%%%%%%%%%%%%%%%%%%%%%%%%%%%%%%%%%%%
\section{Generalized Schur-Weyl duality}\label{sec:3}
We will now prove Theorem~\ref{thm:1}. It turns out that our result is a
special case of a more general result, as formulated below.

Suppose that $A$ and $B$ are $k$-algebras and $V$ is an
$(A,B)$-bimodule. Then $V$ is a left $B\op$-module and there are
$k$-algebra homomorphisms
\begin{equation}\label{eq:1}
  \xymatrix{A \ar[r]^-{\Phi} &\End_k(V)& B\op \ar[l]_-{\Psi}} ,
\end{equation}
where $\Phi(a)(v)= a v$ and $\Psi(b)(v)= v b$ for $a\in A$, $v\in V$, and
$b\in B$.  Assume that the triple $(A,V,B)$ satisfies Schur-Weyl duality, so
\begin{equation*}
  \Phi(A)= \End_{B}(V)\quad \text{and}\quad \Psi(B)= \End_{A} (V).
\end{equation*}

Suppose that $e$ in $B$ is an idempotent such that $Ve\ne 0$. Clearly $Ve$
is an $(A,eBe)$-bimodule and we can ask under what conditions $(A, Ve, eBe)$
satisfies Schur-Weyl duality. In this situation, the commuting actions
induce $k$-algebra homomorphisms
\begin{equation*}
  \xymatrix{A \ar[r]^-{\Phi_e} &\End_k(Ve)& (eBe)\op \ar[l]_-{\Psi_e} }
\end{equation*}
such that 
\begin{equation}\label{eq:6}
  \Phi_e(A)\subseteq \End_{eBe}(Ve) \quad \text{and} \quad
  \Psi_e(eBe)\subseteq \End_{A} (Ve).  
\end{equation}
We wish to find conditions under which the above inclusions are equalities;
that is, we wish to prove that $(A, Ve, eBe)$ satisfies Schur-Weyl duality
under appropriate hypotheses. That the second inclusion in~\eqref{eq:6} is an
equality is an easy general fact, requiring no additional hypothesis.

\begin{lemma}\label{lem:1}
  Suppose that $(A,V,B)$ satisfies Schur-Weyl duality, $e$ is an idempotent
  in $B$ such that $Ve\ne0$, and $\Psi_e\colon eBe\to \End_k(Ve)$ is the
  $k$-algebra homomorphism induced by the right $eBe$-module structure on
  $Ve$. Then
  \[
  \Psi_e(eBe)= \End_A(Ve).
  \]
\end{lemma}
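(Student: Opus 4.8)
The plan is to exploit the fact that $\Psi\colon B \to \End_A(V)$ is already known to be surjective, and to transfer that surjectivity along the idempotent $e$. Concretely, the key observation is that $\End_A(Ve)$ should be identified with $e\cdot\End_A(V)\cdot e$, after which the desired equality follows formally from $\Psi(B) = \End_A(V)$ together with the computation $\Psi(eBe) = \Psi(e)\Psi(B)\Psi(e)$ and the compatibility of $\Psi_e$ with $\Psi$.

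First I would make the identification $\End_A(Ve) \cong e'\End_A(V)e'$ precise, where $e' = \Psi(e)\in\End_k(V)$ is the idempotent endomorphism $v \mapsto ve$. Since $e'$ is an $A$-module endomorphism of $V$ (the actions of $A$ and $B$ commute), $Ve = e'V$ is an $A$-submodule and a direct summand, $V = e'V \oplus (1-e')V$ as $A$-modules. For any idempotent $e'$ in a ring $S = \End_A(V)$, there is a standard isomorphism $\End_A(e'V) \cong e'Se'$: given $f \in \End_A(e'V)$, extend it by zero on $(1-e')V$ to get $\tilde f = e'\tilde f e' \in e'Se'$, and conversely restrict an element of $e'Se'$ to $e'V$. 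I would spell out that these two assignments are mutually inverse $k$-algebra isomorphisms. This reduces the lemma to showing $\Psi_e(eBe) = e'\,\End_A(V)\,e'$ under this identification.

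Next I would check that $\Psi_e$ is compatible with $\Psi$ in the sense that, for $b \in eBe$, the endomorphism $\Psi_e(b) \in \End_k(Ve)$ corresponds under the above isomorphism to $\Psi(b) = e'\Psi(b)e' \in e'\End_k(V)e'$ — this is immediate from $\Psi_e(b)(ve) = veb = \Psi(b)(ve)$ together with $b = ebe$ forcing $\Psi(b) = \Psi(e)\Psi(b)\Psi(e)$. Therefore $\Psi_e(eBe)$ corresponds to $\{\Psi(ebe) : b \in B\} = \Psi(e)\Psi(B)\Psi(e) = e'\Psi(B)e'$. Now invoke the hypothesis $\Psi(B) = \End_A(V)$ to conclude $\Psi_e(eBe)$ corresponds to $e'\End_A(V)e' = \End_A(Ve)$, which is exactly the claim. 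I would also remark that $Ve \neq 0$ guarantees $e' \neq 0$, so the identification is nontrivial, though the argument does not actually need this.

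I do not anticipate a serious obstacle here; the only point requiring care is being scrupulous about the direction of the various module structures, since $V$ is a right $B$-module (equivalently a left $B\op$-module) and $\Psi$ lands in $\End_k(V)$ with $\Psi(b)(v) = vb$, so one must verify that $\Psi(b_1b_2) = \Psi(b_1)\Psi(b_2)$ really does hold with this convention — it does, because $\Psi(b_1 b_2)(v) = v(b_1 b_2) = (vb_1)b_2 = \Psi(b_2)(\Psi(b_1)(v))$ would be contravariant, so in fact $\Psi$ is an antihomomorphism on $B$ and a homomorphism on $B\op$, matching the statement in \eqref{eq:1}. Keeping this bookkeeping straight is the main thing to watch, but it introduces no real difficulty, and the proof is essentially the general ``idempotent truncation of an endomorphism ring'' principle.
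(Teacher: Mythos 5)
Your proof is correct and follows essentially the same route as the paper: both arguments pass to the idempotent $\pi=\Psi(e)\in\End_A(V)$, use the $A$-module decomposition $V=Ve\oplus V(1-e)$ to identify $\End_A(Ve)$ with the corner $\pi\,\End_A(V)\,\pi$, and then push the surjectivity of $\Psi$ through this truncation via $\Psi(ebe)=\pi\Psi(b)\pi$. Your remark about the $B$ versus $B\op$ bookkeeping is the right point to watch and is handled correctly.
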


\begin{proof}
  Set $\pi= \Psi(e)$. Then $\pi(v)= ve$ for $v$ in $V$, $\pi$ is an
  idempotent in $\End_{A}(V)$, and $Ve$ is the image of $\pi$.

  Suppose $\varphi$ is in $\End_{k}(V)$. Then $\pi\varphi\pi(Ve)\subseteq
  Ve$. We denote the restriction of $\pi \varphi \pi$ to $Ve$ by $\pi
  \varphi \pi|_{Ve}$. Then $\pi \varphi \pi|_{Ve}$ is in
  $\End_{k}(Ve)$. Define
  \[
  \Pi\colon \End_{k}(V)\to \End_{k}(Ve) \quad \text{by} \quad \Pi(\varphi)=
  \pi\varphi\pi|_{Ve}.
  \]
  Clearly $Ve$ is an $A$-submodule of $V$. If $\varphi$ is $A$-linear, then
  so is $\pi \varphi \pi|_{Ve}$. Therefore, $\Pi(\End_A(V))
  \subseteq \End_A(Ve)$.  The $A$-module decomposition $V\cong Ve\oplus
  V(1-e)$ of $V$ determines a canonical decomposition
  \begin{multline*}
    \End_A(V) \cong \\
    \End_A(Ve) \oplus \Hom_A(Ve, V(1-e)) \oplus \Hom_A(V(1-e), Ve)
    \oplus \End_A(V(1-e))
  \end{multline*}
  under which the linear map $\Pi$ is identified with the projection onto
  $\End_A(Ve)$. In particular,
  \begin{equation}\label{eq:2}
    \Pi(\End_A(V))= \End_A(Ve). 
  \end{equation}

  It is straightforward to check that $\Pi \Phi=\Phi_e$ and so we can
  extend \eqref{eq:1} to a commutative diagram
  \begin{equation*}
  \begin{gathered} %% to force vertical centering of eq no
    \xymatrix@!C{A \ar[r]^-{\Phi} \ar[dr]_-{\Phi_e} &\End_k(V) \ar[d]^{\Pi}& B
      \ar[l]_-{\Psi} \ar[d]^{\Pi_e} \\
      &\End_k(Ve) & eBe \ar[l]^-{\Psi_e} }
  \end{gathered}
  \end{equation*}
  of $k$-linear maps, where $\Pi_e\colon B\to eBe$ is given by
  $\Pi_e(b)=ebe$. Thus,
  \begin{equation*}
    \Psi_e(eBe)= \Psi_e\Pi_e(B)= \Pi \Psi(B) = \Pi(\End_A(V))
    = \End_A(Ve), 
  \end{equation*}  
  where the penultimate equality follows from the assumption that $(A,V,B)$
  satisfies Schur-Weyl duality, and the final equality follows
  from~\eqref{eq:2}.
\end{proof}

%%%%%%%%%%%%%%%%%%%%%%%%%%%%%%%%%%%%%%%%%%%%%%%%%%%%%%%%%%%%%%%%%%%%%%
\subsection*{The semisimple case}\label{ssec:ss1}
Showing that the containment $\Phi_e(A)\subseteq \End_{eBe}(Ve)$ is an
equality is not so easy. We consider first the case when $\Phi(A)$ is a
split, semisimple $k$-algebra.

Precisely, assume that $V$ is a finite dimensional $k$-vector space, and
suppose that $(A,V,B)$ satisfies Schur-Weyl duality and that $V$ is a
completely reducible $A$-module whose irreducible constituents are
absolutely irreducible. In this case we show that if $e$ is an idempotent in
$B$ with $Ve\ne0$, then $(A, Ve, eBe)$ satisfies Schur-Weyl duality and both
the $eBe$-module structure of $Ve$ and the algebra structure of
$\Psi_e(eBe)$ are determined by the $A$-module structure of $V$. The
argument is based on a version of the double centralizer theorem in the form
given below. We include a sketch of the proof in order to establish notation
and because of the lack of a suitable reference.

\begin{theorem}\label{thm:dct}
  Suppose $k$ is a field, $V$ is a finite dimensional $k$-vector
  space, and $X$ is a subalgebra of $\End_k(V)$ such that $V$ is a
  completely reducible $X$-module whose irreducible constituents are
  absolutely irreducible. Define $Y=\End_X(V)$ and suppose $\{L_1,
  \dots, L_p\}$ is a complete set of non-isomorphic, irreducible
  $X$-modules. For $1\leq i\leq p$ define $M_i= \Hom_X(L_i,V)$. Then
  the following statements hold.
  \begin{enumerate}
  \item $X$ and $Y$ are split, semisimple $k$ algebras and $\{M_1,
    \dots, M_p\}$ is a complete set of non-isomorphic, irreducible
    $Y$-modules.
  \item The natural evaluation map $\bigoplus_{i=1}^p L_i\otimes_k M_i \to
    V$ is an $(X,Y\op)$-bimodule isomorphism.
  \item $X=\End_Y(V)$ and the triple $(X, V, Y\op)$ satisfies Schur-Weyl
    duality.
  \end{enumerate}
\end{theorem}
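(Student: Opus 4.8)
The plan is to prove the three assertions of Theorem~\ref{thm:dct} in the order stated, since each relies on the previous one, and the whole argument is essentially a careful unpacking of the classical double-centralizer theorem adapted to the split case. First I would set up the Wedderburn structure. Because $V$ is a completely reducible $X$-module with absolutely irreducible constituents, we may write $V \cong \bigoplus_{i=1}^p L_i^{\oplus d_i}$ as $X$-modules, with $d_i = \dim_k \Hom_X(L_i, V) = \dim_k M_i \geq 1$ (discarding any $L_i$ not occurring). Since each $L_i$ is absolutely irreducible, $\End_X(L_i) = k$, so by Schur's lemma the image of $X$ in $\End_k(V)$ is exactly $\bigoplus_i \End_k(L_i)$ acting block-diagonally; hence $X$ is split semisimple. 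Computing $Y = \End_X(V)$ block by block gives $Y \cong \bigoplus_i \operatorname{Mat}_{d_i}(k) \cong \bigoplus_i \End_k(M_i)$, which is again split semisimple, and one reads off that the $M_i$ are pairwise non-isomorphic irreducible $Y$-modules exhausting all of them (their dimensions and multiplicities in $V$ are interchanged with those of the $L_i$). This proves (1).

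For (2), the evaluation map $\operatorname{ev}\colon \bigoplus_i L_i \otimes_k M_i \to V$, $\ell \otimes \psi \mapsto \psi(\ell)$, is visibly $X$-linear in the left factor and $Y$-linear in the right factor (where $Y$ acts on $M_i = \Hom_X(L_i,V)$ by postcomposition, and the opposite algebra appears because $Y$ then acts on $V$ on the... right via its definition as an endomorphism algebra — I would track the $\op$ carefully here). Injectivity and surjectivity follow from the dimension count $\dim_k(L_i \otimes_k M_i) = (\dim L_i)(\dim M_i) = (\dim L_i) d_i$, summing to $\dim V$, together with the fact that the restriction of $\operatorname{ev}$ to each isotypic component $L_i \otimes_k M_i$ lands in the $L_i$-isotypic component of $V$ and is an isomorphism there — this is just the canonical identification $L_i \otimes_k \Hom_X(L_i, V) \xrightarrow{\sim} (\text{$L_i$-isotypic part of }V)$. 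So $\operatorname{ev}$ is an $(X, Y\op)$-bimodule isomorphism.

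For (3), transport the $X$- and $Y$-actions through $\operatorname{ev}$ to $W := \bigoplus_i L_i \otimes_k M_i$. There $X$ acts as $\bigoplus_i \End_k(L_i) \otimes 1$ and $Y$ acts as $\bigoplus_i 1 \otimes \End_k(M_i)$, so a standard computation with tensor products of matrix algebras shows each is the full centralizer of the other in $\End_k(W) \supseteq \bigoplus_i \End_k(L_i) \otimes \End_k(M_i)$: indeed an endomorphism commuting with all of $1 \otimes \End_k(M_i)$ on $L_i \otimes M_i$ must be of the form $(\text{something}) \otimes 1$, and one that also respects the block decomposition and commutes across blocks lies in $\bigoplus_i \End_k(L_i) \otimes 1$, which is the image of $X$. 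Hence $\End_Y(V) = X$, and since we already have $\End_X(V) = Y$ by definition, the triple $(X, V, Y\op)$ satisfies Schur-Weyl duality. The only genuinely delicate bookkeeping is keeping the left/right and opposite-algebra conventions consistent between $\Hom_X(L_i,V)$ as a $Y$-module and as a $Y\op$-module; I expect that, rather than any mathematical content, to be the main place where care is needed. Everything else is the textbook argument, included here for completeness and to fix notation for the application in the next subsection.
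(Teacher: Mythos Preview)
Your proposal is correct and follows essentially the same route as the paper: both decompose $V$ into isotypic components, identify $X\cong\bigoplus_i M_{l_i}(k)$ and $Y\cong\bigoplus_i M_{m_i}(k)$ via absolute irreducibility and Schur's lemma, and verify that the evaluation map is a bimodule isomorphism on each block. The only cosmetic difference is in part~(3): the paper concludes $X=\End_Y(V)$ by a dimension count (observing $V\cong\bigoplus_i M_i^{l_i}$ as a $Y$-module, so $\End_Y(V)\cong\bigoplus_i M_{l_i}(k)$ has the same dimension as $X$ and contains it), whereas you compute the centralizer of $\bigoplus_i 1\otimes\End_k(M_i)$ directly inside $\End_k(W)$; these are equivalent standard arguments.
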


\begin{proof}
  It follows from \cite[(3.31)]{curtisreiner:methodsI} that $X$ is
  semisimple and that each $L_i$ occurs as an irreducible constituent of
  $V$. Let $V_1$, \dots, $V_p$ be the homogeneous components of $V$ where
  $V_i\cong L_i^{m_i}$, so $V\cong \bigoplus_{i=1}^p V_i \cong
  \bigoplus_{i=1}^p L_i^{m_i}$. Set $\dim L_i= l_i$. Then $X\cong
  \bigoplus_{i=1}^p M_{l_i}(k)$ is a split, semisimple $k$-algebra.

  Now let $Y=\End_X(V)\cong \bigoplus_{i=1}^p \End_X(V_i)$ be the
  centralizer of $X$ in $\End_k(V)$. For $1\leq i\leq p$ set
  $Y_i= \End_X(V_i)$, $M_i'= \Hom_X(L_i, V_i)$, and $M_i= \Hom_X(L_i,
  V)$.
  Then $Y_i\cong M_{m_i}(k)$ is a simple $k$-algebra,
  $Y\cong \bigoplus_{i=1}^p M_{m_i}(k)$ is a semisimple $k$-algebra with the
  property that every irreducible $Y$-module is absolutely irreducible,
  $M_i'$ is an irreducible $Y_i$-module, $M_i$ is an irreducible $Y$-module
  on which the factor $Y_i$ acts non-trivially, and $M_i'\cong M_i$ as
  $Y$-modules, where $Y$ acts on $M_i'$ via the projection to $Y_i$. It is
  straightforward to check that the natural evaluation map
  $\varphi_i\colon L_i\otimes_k M_i'\to V_i$ with
  $\varphi_i(l\otimes f)= f(l)$ is an isomorphism of $(X,Y_i\op)$-bimodules,
  where $X\times Y_i\op$ acts on $L_i\otimes_k M_i'$ with
  $(x,y)\cdot l\otimes f= x\cdot l\otimes y\circ f$, and on $V_i$ by
  $(x,y) \cdot v= x(y(v))$, for all $x\in X$, $y\in Y_i$, $l\in L_i$,
  $f\in M_i'$, and $v\in V_i$. This proves the first statement in the
  theorem.

  Using the isomorphisms $\varphi_i$, it is straightforward to show that the
  natural evaluation map $\bigoplus_{i=1}^r L_i\otimes_k M_i\to V$ is an
  isomorphism of $(X, Y\op)$-bimodules. 

  It follows that as a $Y$-module, $V$ is isomorphic to $\bigoplus_{i=1}^p
  M_i^{l_i}$. Because each $M_i$ is absolutely irreducible we see that
  $\End_Y(V) \cong \bigoplus_{i=1}^p M_{l_i}(k) $. Finally, because
  $X\subseteq \End_Y(V)$, it follows that $X= \End_Y(V)$. Thus $(X, V,
  Y\op)$ satisfies Schur-Weyl duality.
\end{proof}

Now suppose $(A,V,B)$ satisfies Schur-Weyl duality, $V$ is a completely
reducible $A$-module whose irreducible constituents are absolutely
irreducible, and $e$ is an idempotent in $B$ such that $Ve\ne 0$. Set
$X=\Phi(A)$ and $Y=\Psi(B\op)$. Then $X$ and $Y$ are subalgebras of
$\End_k(V)$, $V$ is a completely reducible $X$-module whose irreducible
constituents are absolutely irreducible, $Y=\End_X(V)$, and $X=\End_Y(V)$.

\begin{theorem}\label{thm:2}
  With the assumptions and notation above, the following statements hold.
  \begin{enumerate}
  \item The subalgebra $\Psi_e(eBe)$ of $\End_k(Ve)$ is a split,
    semisimple $k$-algebra and $\{\, M_ie \mid M_ie\ne0\,\}$ is a
    complete set of non-isomorphic, irreducible right
    $\Psi_e(eBe)$-modules.
  \item $Ve$ is a completely reducible right $eBe$-module and $\{\,
    M_ie \mid M_ie\ne0\,\}$ is a complete set of non-isomorphic,
    irreducible right $eBe$-modules that occur as constituents of
    $Ve$.
  \item The triple $(A, Ve, eBe)$ satisfies Schur-Weyl duality.
  \end{enumerate}
\end{theorem}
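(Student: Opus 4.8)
The plan is to deduce everything from the double centralizer machinery of Theorem~\ref{thm:dct} applied first to $X = \Phi(A)$ on $V$, and then transported to $Ve$ via the idempotent $\pi = \Psi(e) \in \End_A(V)$. By hypothesis $(A,V,B)$ satisfies Schur-Weyl duality, so $X = \Phi(A)$ acts on $V$ with $V$ completely reducible and absolutely irreducible constituents $L_1, \dots, L_p$, and $Y = \Psi(B\op) = \End_X(V)$ while $X = \End_Y(V)$. Write $M_i = \Hom_X(L_i, V)$, so by Theorem~\ref{thm:dct} the $M_i$ form a complete irredundant list of irreducible $Y$-modules and the evaluation map $\bigoplus_i L_i \otimes_k M_i \xrightarrow{\sim} V$ is a $(X, Y\op)$-bimodule isomorphism. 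The first step is to observe that right multiplication by $e$, i.e.\ the idempotent $\pi \in Y$, decomposes compatibly: since $e$ acts only on the "$B$-side", $Ve \cong \bigoplus_i L_i \otimes_k M_i e$ as $(A, eBe)$-bimodules, where $M_i e$ is the image of the idempotent $\pi$ acting on the irreducible $Y$-module $M_i$. Because $M_i$ is irreducible over $Y$ and $\pi$ is an idempotent in $Y$, the space $M_i e$ is either $0$ or it is an irreducible module over the corner algebra $\pi Y \pi = \Psi_e(eBe)\op$ (the standard fact that corners of a matrix algebra acting on a simple module either kill it or give a simple corner-module); and distinct nonzero $M_i e$ remain non-isomorphic since they already lie in distinct Wedderburn blocks.

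With that decomposition in hand, the three assertions follow in order. For (1): $\Psi_e(eBe)$ is, by Lemma~\ref{lem:1}, equal to $\End_A(Ve)$, and $\End_A(Ve) \cong \bigoplus_{M_i e \ne 0} \End_k(M_i e)$ from the bimodule decomposition of $Ve$ above, which is a split semisimple $k$-algebra; the $M_i e$ (those that are nonzero) are exactly its irreducible right modules, one per Wedderburn block. For (2): reading the same decomposition $Ve \cong \bigoplus_i L_i \otimes_k M_i e$ as a right $eBe$-module, $Ve$ is a direct sum of copies of the $M_i e$, hence completely reducible, and the distinct nonzero $M_i e$ are precisely the irreducible $eBe$-constituents appearing. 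For (3): we must show $\Phi_e(A) = \End_{eBe}(Ve)$. Here I would invoke Theorem~\ref{thm:dct} a second time, now with the algebra $\Psi_e(eBe)\op$ acting on the finite-dimensional space $Ve$: by (1)--(2) this action is completely reducible with absolutely irreducible constituents (the $M_i e$, which are absolutely irreducible because $\End_k(M_i e)$ is a full matrix algebra over $k$), so its double centralizer $\End_{\Psi_e(eBe)}(Ve)$ equals its own bicommutant, namely $\bigoplus_{M_i e \ne 0} \End_k(L_i)$ under the evaluation isomorphism. Since $\Phi_e(A)$ certainly lands inside $\End_{eBe}(Ve)$ by~\eqref{eq:6}, it remains only to check that $\Phi_e(A)$ already surjects onto all of $\bigoplus_{M_i e \ne 0} \End_k(L_i)$; but $\Phi(A) = X = \bigoplus_i \End_k(L_i)$ by the first application of the double centralizer theorem, and restricting to $Ve$ simply forgets those blocks $i$ with $M_i e = 0$ (the action of $X$ on $L_i \otimes_k M_i e$ is via the full $\End_k(L_i)$ on the first tensor factor), so the restriction map $X \to \End_{eBe}(Ve)$ is still surjective. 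Hence $\Phi_e(A) = \End_{eBe}(Ve)$ and the triple satisfies Schur-Weyl duality.

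The step I expect to be the genuine content, rather than bookkeeping, is the passage through $M_i e$: one needs that for an idempotent $\pi$ in a split semisimple algebra $Y$ and a simple $Y$-module $M$, the space $\pi M$ is either zero or simple over $\pi Y \pi$, \emph{and} that the blocks stay separated so no spurious isomorphisms or extra constituents appear. This is where the absolute irreducibility hypothesis is used in an essential way — it guarantees each block of $X$ (and of $Y$) is a full matrix algebra over $k$, so corners behave as expected and the tensor decomposition of $V$ is genuinely a tensor product over $k$ rather than over a division algebra. Everything else — the compatibility $Ve \cong \bigoplus_i L_i \otimes_k M_i e$, the identification of $\Phi_e(A)$ with the restriction of $X$, and the reconciliation with Lemma~\ref{lem:1} — is routine once the decomposition is set up, so I would state the $M_i e$ lemma carefully and then let the three parts fall out.
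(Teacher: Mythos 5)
Your proposal is correct and follows essentially the same route as the paper: both rest on the bimodule decomposition $Ve \cong \bigoplus_{M_ie\ne0} L_i\otimes_k M_ie$ induced from Theorem~\ref{thm:dct}, the corner idempotent $\pi=\Psi(e)$ with its action on the simple $Y$-modules $M_i$, and Lemma~\ref{lem:1}. The only divergence is cosmetic: for part (3) the paper applies Theorem~\ref{thm:dct} a second time to $X_1=\Phi_e(A)$ acting on $Ve$ and concludes $X_1=\End_{Y_1}(Ve)=\End_{eBe}(Ve)$ via Lemma~\ref{lem:1}, whereas you compute $\End_{eBe}(Ve)\cong\bigoplus_{M_ie\ne0}\End_k(L_i)$ directly and verify that restriction of $\Phi(A)$ surjects onto it --- an equivalent bookkeeping of the same Wedderburn blocks.
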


\begin{proof}
  Set $\pi=\Psi(e)$, so $\pi$ is a non-zero idempotent in
  $Y$. By Theorem~\ref{thm:dct} and the general theory of split, semisimple
  algebras, $\pi Y \pi$ is a split, semisimple $k$-algebra and
  $\{\, \pi M_i \mid \pi M_i\ne0\,\}$ is a complete set of non-isomorphic,
  irreducible $\pi Y \pi$-modules. Now $\pi(V)=Ve$ and the image of the
  homomorphism $\Psi_e\colon (eBe)\op \to \End_k(Ve)$ coincides with the
  image of the natural homomorphism $\Psi_\pi\colon \pi Y\pi \to
  \End_k(\pi(V))$.
  Moreover, by definition $\pi M_i= M_ie$ for $1\leq i\leq p$. Therefore,
  $\Psi_e(eBe)$ is a split, semisimple $k$-algebra and
  $\{\, M_ie \mid M_ie\ne0\,\}$ is a complete set of non-isomorphic,
  irreducible right $\Psi_e(eBe)$-modules.

  The algebra $A$ acts on $\bigoplus_{i=1}^p L_i\otimes_k M_i$ through its
  left action on each $L_i$, and the algebra $B$ acts on $\bigoplus_{i=1}^p
  L_i\otimes_k M_i$ through its right action on each $M_i$. Therefore, the
  $(A,B)$-bimodule isomorphism $V\cong \bigoplus_{i=1}^p L_i\otimes_k M_i$
  induces an $(A,eBe)$-bimodule isomorphism
  \begin{equation}\label{eq:bi}
    Ve\cong \bigoplus_{M_ie\ne 0} L_i\otimes_k M_ie.
  \end{equation}
  If $M_ie\ne0$, then $M_ie$ is an absolutely irreducible
  $\Psi_e(eBe)$-module and hence an absolutely irreducible $eBe$-module. The
  bimodule isomorphism~\eqref{eq:bi} induces an isomorphism of right
  $eBe$-modules
  \[
  Ve\cong \bigoplus_{M_ie\ne0} (M_ie)^{\dim L_i},
  \]
  which proves the second statement in the theorem.

  Finally, set $X_1= \Phi_e(A)$. Then $Ve$ is a completely reducible
  $X_1$-module and so by Theorem~\ref{thm:dct}, if $Y_1= \End_{X_1}(Ve)$, then
  $X_1 = \End_{Y_1}(Ve)$. Clearly $\End_{X_1}(Ve)= \End_A(Ve)$ and by
  Lemma~\ref{lem:1}, $\End_A(Ve)= \Psi_e(eBe)$. Moreover,
  $\Phi_e(A)=X_1= \End_{Y_1}(Ve)= \End_{eBe}(Ve)$, and hence $(A, Ve, eBe)$
  satisfies Schur-Weyl duality, as claimed.
\end{proof}

%%%%%%%%%%%%%%%%%%%%%%%%%%%%%%%%%%%%%%%%%%%%%%%%%%%%%%%%%%%%%%%%%%%%%%
We can now complete the proof of Theorem~\ref{thm:1}.

\begin{proof}[Proof of Theorem~\ref{thm:1}]
  Suppose that $k$ is a field such that the cardinality of $k$ is larger
  than $r$ and the characteristic of $k$ does not divide $r$. Then by
  classical Schur-Weyl duality, the triple
  $(k\GL_n(k), T^{n,r}, k\Sigma_r)$ satisfies Schur-Weyl duality. Moreover,
  $L^{n,r} = T^{n,r} e$. Thus, it follows from Lemma~\ref{lem:1} that
  $\Psi_e(ek\Sigma_r e)=\End_{\GL_n(k)}(L^{n,r})$.

  If in addition $T^{n,r}$ is a direct sum of absolutely irreducible
  $k\GL_n(k)$-modules, then it follows from Theorem~\ref{thm:2} that
  $(k\GL_n(k), L^{n,r}, e k\Sigma_r e)$ satisfies Schur-Weyl duality, and so
  in particular, $\Phi_e(k\GL_n(k)) = \End_{ek\Sigma_r e}(L^{n,r})$. This
  completes the proof of the theorem.
\end{proof}

%%%%%%%%%%%%%%%%%%%%%%%%%%%%%%%%%%%%%%%%%%%%%%%%%%%%%%%%%%%%%%%%%%%%%%
\subsection*{Idempotents}
We now return to the general situation considered at the beginning of this
section where the triple $(A,V,B)$ satisfies Schur-Weyl duality and $e\in B$
is an idempotent with $Ve\ne0$. We give various conditions that are
equivalent to the assertion that the triple $(A,Ve,eBe)$ satisfies
Schur-Weyl duality.

To start, consider the $k$-algebra homomorphisms
\[
\Phi' \colon A\to \End_B(V) \quad\text{and}\quad \Phi_e' \colon
A\to \End_{eBe}(Ve)
\]
induced by $\Phi$ and $\Phi_e$, respectively. Because $(A,V,B)$ satisfies
Schur-Weyl duality, $\Phi'$ is surjective, and by Lemma~\ref{lem:1},
$(A,Ve,eBe)$ satisfies Schur-Weyl duality if and only if $\Phi_e'$ is
surjective.

Suppose $\varphi$ is in $\End_{B}(V)$. Then $\varphi(Ve)=\varphi(V)e
\subseteq Ve$ and the restriction of $\varphi$ to $Ve$ induces an
$eBe$-linear homomorphism $\bar \varphi\colon Ve\to Ve$. Define
\[
  \Theta_e\colon \End_B(V)\to \End_{eBe}(Ve)\quad \text{by} \quad
  \Theta_e(\varphi)= \bar\varphi.
\]
It follows immediately from the definitions that $\Theta_e\Phi'=
\Phi_e'$. Because $\Phi'$ is surjective, it then follows that $\Phi_e'$ is
surjective if and only if $\Theta_e$ is surjective. Clearly $\Theta_e$ is
surjective if and only if every $eBe$-linear endomorphism of $Ve$ extends to
a $B$-linear endomorphism of $V$. This proves the next lemma.

\begin{lemma}\label{lem:2}
  Suppose that $(A,V,B)$ satisfies Schur-Weyl duality and $e$ is an
  idempotent in $B$ such that $Ve\ne0$. Then the following are equivalent.
  \begin{enumerate}
  \item The triple $(A, Ve, eBe)$ satisfies Schur-Weyl duality.
  \item Every $eBe$-linear endomorphism of $Ve$ extends to a $B$-linear
    endomorphism of $V$.
  \end{enumerate}
\end{lemma}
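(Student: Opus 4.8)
The plan is to unwind the chain of reductions that the surrounding text has already set up, so that the equivalence becomes nearly tautological once the right bookkeeping is in place. By Lemma~\ref{lem:1} we already know $\Psi_e(eBe)=\End_A(Ve)$ unconditionally, so $(A,Ve,eBe)$ satisfies Schur-Weyl duality if and only if the remaining inclusion $\Phi_e(A)\subseteq \End_{eBe}(Ve)$ from~\eqref{eq:6} is an equality, i.e.\ if and only if the induced homomorphism $\Phi_e'\colon A\to \End_{eBe}(Ve)$ is surjective. First I would make precise the factorization $\Phi_e'=\Theta_e\Phi'$, where $\Phi'\colon A\to\End_B(V)$ is the (surjective, by hypothesis) map coming from classical-style Schur-Weyl duality for $(A,V,B)$, and $\Theta_e\colon\End_B(V)\to\End_{eBe}(Ve)$ is restriction. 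The only genuinely substantive point here is that $\Theta_e$ is well defined: given $\varphi\in\End_B(V)$, one checks $\varphi(Ve)=\varphi(V)e\subseteq Ve$ using $B$-linearity (right-multiplication by $e\in B$ commutes with $\varphi$), and that the restriction $\bar\varphi:=\varphi|_{Ve}$ is $eBe$-linear, which is immediate since $eBe\subseteq B$ acts on $Ve$ by restriction of the $B$-action on $V$.

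Next, since $\Phi'$ is surjective, $\Phi_e'=\Theta_e\Phi'$ is surjective \emph{if and only if} $\Theta_e$ is surjective. This is the pivot of the argument. Then I would observe that surjectivity of $\Theta_e$ is literally the statement that every element of $\End_{eBe}(Ve)$ is of the form $\bar\varphi$ for some $\varphi\in\End_B(V)$, i.e.\ that every $eBe$-linear endomorphism of $Ve$ is the restriction of some $B$-linear endomorphism of $V$. Chaining the three equivalences — (Schur-Weyl for $(A,Ve,eBe)$) $\iff$ ($\Phi_e'$ surjective) $\iff$ ($\Theta_e$ surjective) $\iff$ (the extension property in statement (2)) — yields the lemma.

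I do not expect a serious obstacle: the content of the lemma is packaged entirely in the two preparatory paragraphs of the excerpt, and the proof is a matter of assembling $\Phi_e'=\Theta_e\Phi'$ together with the surjectivity of $\Phi'$ supplied by the hypothesis that $(A,V,B)$ satisfies Schur-Weyl duality. The one place where a little care is warranted is verifying that $\Theta_e$ lands in $\End_{eBe}(Ve)$ rather than merely $\End_k(Ve)$, and that $\Phi_e'$ really does factor as claimed — both are direct computations with the definitions $\Phi_e(a)(ve)=a(ve)$ and the identity $\varphi(v)e=\varphi(ve)$ for $\varphi$ $B$-linear. Everything else is formal. In writing this up I would keep it to a few lines, since the excerpt has essentially already given the proof in prose and the lemma statement merely records the conclusion; the role of the formal proof is just to confirm that the informal argument is complete.
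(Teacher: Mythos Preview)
Your proposal is correct and follows essentially the same approach as the paper: reduce via Lemma~\ref{lem:1} to surjectivity of $\Phi_e'$, factor $\Phi_e'=\Theta_e\Phi'$ with $\Theta_e$ the restriction map, and use surjectivity of $\Phi'$ to conclude that $\Phi_e'$ is surjective iff $\Theta_e$ is, which is exactly the extension property in~(2). The paper presents this argument in the two paragraphs immediately preceding the lemma statement, so your assessment that the proof is already given in prose is accurate.
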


Note that the second condition in the lemma depends only on $B$ and the
right $B$-module structure on $V$, and not on the algebra $A$. This
observation can be used to replace the idempotent $e$ by any suitably
equivalent idempotent $f$, as we now explain.  Suppose $f$ is an idempotent
in $B$ such that
\begin{equation*}\label{eq:4}
  ef=f \quad\text{and}\quad fe=e.
\end{equation*}
Then the maps $\rho_f\colon Ve\to Vf$ and $\rho_e\colon Vf\to Ve$ given by
$\rho_f(x)=xf$ and $\rho_e(x)=xe$ are mutual inverses. It is straightforward
to check that
\[
\Xi\colon \End_{eBe}(Ve)\to \End_{fBf}(Vf)\quad\text{by}\quad \Xi(\varphi)=
\rho_f \varphi \rho_e
\] 
is an algebra isomorphism, with inverse $\Xi\inverse(\psi)= \rho_e\psi
\rho_f$ for $\psi$ in $\End_{fBf}(Vf)$. It is also straightforward to check
that $\Xi\, \Theta_e= \Theta_f$. This proves the next lemma.

\begin{lemma}\label{lem:3}
  With the notation as above, $\Theta_e$ is surjective if and only if
  $\Theta_f$ is surjective.
\end{lemma}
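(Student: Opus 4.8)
The plan is to establish the three facts asserted just before the statement of the lemma and then to conclude formally. Explicitly, I will check: (i) that $\rho_e$ and $\rho_f$ are mutually inverse $k$-linear bijections between $Ve$ and $Vf$; (ii) that $\Xi\colon\End_{eBe}(Ve)\to\End_{fBf}(Vf)$, $\Xi(\varphi)=\rho_f\varphi\rho_e$, is a well-defined algebra isomorphism with inverse $\psi\mapsto\rho_e\psi\rho_f$; and (iii) that $\Xi\,\Theta_e=\Theta_f$. Granting these, the lemma is immediate: if $\Theta_e$ is surjective then $\Theta_f=\Xi\,\Theta_e$ is a composite of a surjection and a bijection, hence surjective, and conversely $\Theta_e=\Xi\inverse\,\Theta_f$ is surjective whenever $\Theta_f$ is. (By (i), $Vf\ne0$, so $\Theta_f$ is in fact defined.)

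For (i) I would compute directly, using the relations $ef=f$ and $fe=e$ together with the identities $x=xe$ for $x\in Ve$ and $x=xf$ for $x\in Vf$: for $x\in Ve$ one gets $\rho_e\rho_f(x)=xfe=xe=x$, and for $x\in Vf$ one gets $\rho_f\rho_e(x)=xef=xf=x$. For (ii), multiplicativity and bijectivity of $\Xi$ are formal consequences of $\rho_e\rho_f=\id_{Ve}$ and $\rho_f\rho_e=\id_{Vf}$; what really needs checking is that $\rho_f\varphi\rho_e$ is right $fBf$-linear whenever $\varphi$ is right $eBe$-linear (and symmetrically for the putative inverse map). The efficient way to do this is to record the ``semilinearity'' of $\rho_e$ and $\rho_f$: the maps $z\mapsto eze$ and $w\mapsto fwf$ are mutually inverse unital algebra isomorphisms between $eBe$ and $fBf$ (another short computation with $ef=f$ and $fe=e$), and one has $\rho_e(x\cdot y)=\rho_e(x)\cdot eye$ for $x\in Vf$ and $y\in fBf$, with the analogous identity for $\rho_f$. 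Combining these with the $eBe$-linearity of $\varphi$ yields $(\rho_f\varphi\rho_e)(x\cdot y)=(\rho_f\varphi\rho_e)(x)\cdot y$ for all $y\in fBf$, as required.

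For (iii) I would unwind the definitions. Both $\Xi\,\Theta_e(\varphi)$ and $\Theta_f(\varphi)$ are endomorphisms of $Vf$, and for $x\in Vf$ the right $B$-linearity of $\varphi\in\End_B(V)$ gives
\[
\bigl(\Xi\,\Theta_e(\varphi)\bigr)(x)=\rho_f\bigl(\varphi(xe)\bigr)=\varphi(x)ef=\varphi(x)f=\varphi(x)=\bigl(\Theta_f(\varphi)\bigr)(x),
\]
where the last step before $\varphi(x)$ uses $\varphi(x)f=\varphi(xf)=\varphi(x)$ since $x=xf$. Hence $\Xi\,\Theta_e=\Theta_f$, which finishes the argument. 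I do not anticipate any genuine difficulty here; the only mildly delicate point is the bookkeeping with the idempotents in step (ii), where both relations $ef=f$ and $fe=e$ must be used to move $e$ and $f$ past elements of $B$ correctly, but this is entirely routine.
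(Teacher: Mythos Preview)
Your proposal is correct and follows exactly the paper's approach: the paper's proof is the short paragraph preceding the lemma, which asserts precisely your items (i)--(iii) as ``straightforward to check'' and then concludes. You have simply filled in those straightforward checks in detail.
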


The next theorem follows from Lemma~\ref{lem:2} and Lemma~\ref{lem:3}.

\begin{theorem}\label{thm:main1}
  Suppose that $(A,V,B)$ satisfies Schur-Weyl duality, and $e$ and $f$ are
  idempotents in $B$ such that $Ve\ne0$, $ef=f$, and $fe=e$. Then the
  following are equivalent.
  \begin{enumerate}
  \item $(A, Ve, eBe)$ satisfies Schur-Weyl duality.
  \item Every $eBe$-linear endomorphism of $Ve$ extends to a $B$-linear
    endomorphism of $V$.
  \item $(A, Vf, fBf)$ satisfies Schur-Weyl duality.
  \item Every $fBf$-linear endomorphism of $Vf$ extends to a $B$-linear
    endomorphism of $V$.
  \end{enumerate}
\end{theorem}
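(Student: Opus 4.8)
The plan is to assemble the four equivalences directly from the two preceding lemmas, since essentially all of the substantive work has already been carried out there. First I would observe that the equivalence of (1) and (2) is nothing but Lemma~\ref{lem:2} applied to the idempotent $e$, so no argument is needed for that link.

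Next I would check that the standing hypothesis $Ve\ne0$ forces $Vf\ne0$, so that Lemma~\ref{lem:2} may legitimately be invoked with $f$ in place of $e$. Indeed, since $fe=e$ we have $Ve=V(fe)=(Vf)e\subseteq Vf$, whence $Vf\supseteq Ve\ne0$. Applying Lemma~\ref{lem:2} to the idempotent $f$ then yields the equivalence of (3) and (4).

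It remains only to connect the ``$e$-side'' with the ``$f$-side,'' and for this I would use the analysis carried out just before the statement of Lemma~\ref{lem:2} together with Lemma~\ref{lem:3}. Recall from that discussion that condition (2) is equivalent to the surjectivity of $\Theta_e\colon \End_B(V)\to\End_{eBe}(Ve)$, and, by the same reasoning applied to $f$, that condition (4) is equivalent to the surjectivity of $\Theta_f\colon \End_B(V)\to\End_{fBf}(Vf)$. By Lemma~\ref{lem:3}, $\Theta_e$ is surjective if and only if $\Theta_f$ is surjective, so (2) $\Leftrightarrow$ (4). Combining this with (1) $\Leftrightarrow$ (2) and (3) $\Leftrightarrow$ (4) shows that all four statements are equivalent.

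I do not expect any genuine obstacle here: the only points requiring care are the verification that $Vf\ne0$ (so that Lemma~\ref{lem:2} is applicable to $f$) and the bookkeeping needed to see that conditions (2) and (4) are precisely the concrete translations of the surjectivity of $\Theta_e$ and $\Theta_f$, respectively, both of which are immediate from the constructions preceding Lemma~\ref{lem:2}.
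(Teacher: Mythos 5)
Your proof is correct and follows exactly the route the paper takes: the theorem is assembled from Lemma~\ref{lem:2} (applied to $e$ and to $f$) together with Lemma~\ref{lem:3} and the observation that conditions (2) and (4) are the surjectivity of $\Theta_e$ and $\Theta_f$. One small slip: the inclusion $(Vf)e\subseteq Vf$ you assert is false in general (e.g.\ $B=M_2(k)$, $e=E_{11}$, $f=E_{11}+E_{12}$ gives $Ve\not\subseteq Vf$ for $V=k^2$); but your conclusion $Vf\ne0$ still follows, either from $Ve=(Vf)e$, or more directly because $\rho_f\colon Ve\to Vf$ and $\rho_e\colon Vf\to Ve$ are mutually inverse, so $Vf\cong Ve\ne0$.
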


%%%%%%%%%%%%%%%%%%%%%%%%%%%%%%%%%%%%%%%%%%%%%%%%%%%%%%%%%%%%%%%%%%%%%%
%%%%%%%%%%%%% \S4 Complements
%%%%%%%%%%%%%%%%%%%%%%%%%%%%%%%%%%%%%%%%%%%%%%%%%%%%%%%%%%%%%%%%%%%%%%
\section{Complements}\label{sec:4}

In this section we use the results in the previous section first to
investigate the commuting algebra of the $\GL_n(k)$-action on $L^{n,r}$ when
everything is semisimple, and second to characterize when the triple
$(k\GL_n(k), L^{n,r},e k\Sigma_r e)$ satisfies Schur-Weyl duality, in terms
of certain permutation representations of $\Sigma_r$. Throughout this
section we assume that $|k|>r$, that the characteristic of $k$ does not
divide $r$, and that $k$ contains a primitive $r\th$ root of unity $\zeta$.

If $e=\sum_{\sigma\in \Sigma_r} a_\sigma \sigma$ is any idempotent in
$k\Sigma_r$, then a result of Littlewood (see \cite[Exercise
9.16]{curtisreiner:methodsI} shows that the character of the right
$k\Sigma_r$-module $ek\Sigma_r$, evaluated at a permutation $\tau$, is the
sum $\sum_{\sigma\in \CC} a_\sigma$, where $\CC$ is the conjugacy class of
$\tau$. When $e$ is a Lie idempotent, Garsia \cite[Theorem
5.2]{garsia:combinatorics} gives a formula for the sums
$\sum_{\sigma\in \CC} a_\sigma$. This formula does not depend on the choice
of Lie idempotent. Therefore, up to isomorphism, the right ideal
$ek\Sigma_r$ does not depend on the choice of $e$ and so the following lemma
follows from Lemma~\ref{lem:2}.

\begin{lemma}\label{lem:li}
  Suppose $e$ and $e'$ are Lie idempotents. Then
  $(k\GL_n(k), L^{n,r}, e k\Sigma_r e)$ satisfies Schur-Weyl duality if and
  only if $(k\GL_n(k), L^{n,r}, e' k\Sigma_r e')$ does.
\end{lemma}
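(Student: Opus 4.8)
The plan is to apply Lemma~\ref{lem:2} to both $e$ and $e'$ and to transfer the relevant extension property across a suitable module isomorphism. Since both $e$ and $e'$ are Lie idempotents, they yield the classical Schur-Weyl setup $(A,V,B) = (k\GL_n(k), T^{n,r}, k\Sigma_r)$ with $Ve = L^{n,r} = Ve'$, and by Lemma~\ref{lem:2} the statement ``$(k\GL_n(k), L^{n,r}, ek\Sigma_r e)$ satisfies Schur-Weyl duality'' is equivalent to ``every $ek\Sigma_r e$-linear endomorphism of $L^{n,r}$ extends to a $k\Sigma_r$-linear endomorphism of $T^{n,r}$'', and similarly for $e'$. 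So it suffices to show these two extension properties are equivalent.

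The key input is the remark already recorded in this section: by Littlewood's formula together with Garsia's Theorem~5.2, the character of the right $k\Sigma_r$-module $ek\Sigma_r$ does not depend on the choice of Lie idempotent, hence (since $k\Sigma_r$ is split semisimple under our hypotheses) there is an isomorphism of right $k\Sigma_r$-modules $ek\Sigma_r \cong e'k\Sigma_r$. First I would fix such an isomorphism $\theta\colon ek\Sigma_r \to e'k\Sigma_r$ and observe that $\theta$ is determined by $\theta(e) \in e'k\Sigma_r$, with inverse determined by some element of $ek\Sigma_r$; set $u = \theta(e)$ and let $w$ be the element of $ek\Sigma_r$ with $\theta^{-1}(e') = w$. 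Then $uw = e'$ and $wu = e$ (reading the composites in $e'k\Sigma_r e'$ and $ek\Sigma_r e$ respectively), and $u = e'ue$, $w = ewe'$. Tensoring with $T^{n,r}$ over $k\Sigma_r$ — equivalently, right multiplication — gives mutually inverse $k$-linear maps $R_u\colon L^{n,r} = T^{n,r}e \to T^{n,r}e' = L^{n,r}$ and $R_w\colon T^{n,r}e' \to T^{n,r}e$. These are automatically $k\GL_n(k)$-equivariant since right multiplication commutes with the left $\GL_n(k)$-action.

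Next I would transport endomorphisms: conjugation by $R_u$ carries $\End_k(T^{n,r}e)$ to $\End_k(T^{n,r}e')$, and the main claim is that it carries $ek\Sigma_r e$-linear endomorphisms to $e'k\Sigma_r e'$-linear ones, and $k\Sigma_r$-linear-restricted ones to $k\Sigma_r$-linear-restricted ones — so that the two extension properties of Lemma~\ref{lem:2}(2) correspond under this conjugation. For the first point, the right $ek\Sigma_r e$-action on $T^{n,r}e$ and the right $e'k\Sigma_r e'$-action on $T^{n,r}e'$ are intertwined via $R_u$ and the algebra isomorphism $ek\Sigma_r e \to e'k\Sigma_r e'$ given by $z \mapsto uzw$ (with inverse $z' \mapsto wz'u$); a direct check shows $R_u(x\cdot z) = R_u(x)\cdot (uzw)$ for $x \in T^{n,r}e$, $z \in ek\Sigma_r e$, so $\varphi$ is $ek\Sigma_r e$-linear iff $R_u \varphi R_w$ is $e'k\Sigma_r e'$-linear. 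For the second point, if $\varphi = R_u \psi|_{L^{n,r}} R_w$ is the restriction to $L^{n,r}$ of a $k\Sigma_r$-endomorphism $\psi$ of $T^{n,r}$ — that is, right multiplication by some element is not what I mean; rather $\psi \in \End_{\Sigma_r}(T^{n,r}) = \CS(n,r)$ — then $\psi$ already maps $L^{n,r}$ into $L^{n,r}$ and commutes with $R_u, R_w$, so $R_u \psi|_{L^{n,r}} R_w = \psi|_{L^{n,r}}$, showing the ``extends'' condition for $e$ is literally the same as for $e'$ once we identify $L^{n,r} = T^{n,r}e$ with $T^{n,r}e'$ via $R_u$. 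Assembling: every $e'k\Sigma_r e'$-endomorphism $\varphi'$ of $T^{n,r}e'$ equals $R_u \varphi R_w$ for the $ek\Sigma_r e$-endomorphism $\varphi = R_w \varphi' R_u$; if the $e$-extension property holds, $\varphi$ restricts from some $\psi \in \CS(n,r)$, and then $\varphi' = R_u \psi|_{T^{n,r}e} R_w = \psi|_{T^{n,r}e'}$ — wait, I need $\psi(T^{n,r}e') \subseteq T^{n,r}e'$, which holds since $\CS(n,r)$ preserves $L^{n,r}$ — so $\varphi'$ extends to $\psi$. Hence the $e'$-extension property holds, and by symmetry the converse, completing the proof via Lemma~\ref{lem:2}.

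The main obstacle I anticipate is the passage from ``the characters of $ek\Sigma_r$ and $e'k\Sigma_r$ agree'' to ``the right $k\Sigma_r$-modules $ek\Sigma_r$ and $e'k\Sigma_r$ are isomorphic'': this needs that $k\Sigma_r$ is split semisimple so that modules are determined by their characters, which is exactly where the hypotheses $|k|>r$, $\operatorname{char} k \nmid r$ (and the root-of-unity assumption is not even needed here) are used — and the paper invokes precisely this, citing \cite{james:irreducible}, so I would lean on that. A secondary, purely bookkeeping, nuisance is tracking which composites of $R_u$, $R_w$ land where; the cleanest route is to note that Lemma~\ref{lem:2}(2) is an intrinsic property of the right $k\Sigma_r$-module $V = T^{n,r}$ together with the chosen ``projector'' idempotent up to the equivalence relation of Lemma~\ref{lem:3}, and to phrase the whole argument as: $e$ and $e'$ need not satisfy $ee'=e'$, $e'e=e$, but the module isomorphism $ek\Sigma_r \cong e'k\Sigma_r$ lets us run the same transfer as in Lemmas~\ref{lem:2}--\ref{lem:3} anyway. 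I would double-check that no reference to the left algebra $A = k\GL_n(k)$ sneaks in where it shouldn't, since the equivalence in Lemma~\ref{lem:2}(2) is meant to be $A$-free.
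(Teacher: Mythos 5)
Your overall strategy is the paper's: use Littlewood's trace formula together with Garsia's evaluation of the class sums to conclude that the right ideal $ek\Sigma_r$ is, up to isomorphism, independent of the choice of Lie idempotent, and then observe that condition (2) of Lemma~\ref{lem:2} depends only on the isomorphism class of that right ideal. The paper compresses the second half into the single phrase ``so the following lemma follows from Lemma~\ref{lem:2}''; your explicit transfer --- choosing $u\in e'k\Sigma_r e$ and $w\in ek\Sigma_r e'$ with $uw=e'$ and $wu=e$, conjugating endomorphisms by the resulting mutually inverse right-multiplication maps, and checking that the restriction of a $\psi\in\End_{\Sigma_r}(T^{n,r})$ is carried to the restriction of the same $\psi$ --- is exactly the content of that phrase, and it is carried out correctly (up to the direction of $R_u$ versus $R_w$, which you flagged yourself: with $u=\theta(e)\in e'k\Sigma_r e$, right multiplication by $u$ maps $T^{n,r}e'$ to $T^{n,r}e$, not the other way around).

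The one genuine gap is in your justification of the isomorphism $ek\Sigma_r\cong e'k\Sigma_r$. You deduce it from equality of characters ``since $k\Sigma_r$ is split semisimple under our hypotheses,'' but the standing hypotheses of this section are only $|k|>r$, $\operatorname{char}k\nmid r$, and $\zeta\in k$; semisimplicity of $k\Sigma_r$ requires the characteristic to be $0$ or greater than $r$, i.e.\ not to divide $r!$ rather than merely $r$ (take $r=5$ and $\operatorname{char}k=3$ for a counterexample to your claim). The lemma is stated, and later used in the non-semisimple subsection leading to Proposition~\ref{pro:wtspc}, precisely in this non-semisimple regime, where ``equal trace functions $\Rightarrow$ isomorphic'' fails for arbitrary modules. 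The paper asserts the isomorphism directly from Littlewood and Garsia; to make your version work in the same generality you would need the extra input that $ek\Sigma_r$ and $e'k\Sigma_r$ are projective and that projective $k\Sigma_r$-modules are determined by the relevant character data, or some other argument not routed through semisimplicity. As written, your proof establishes the lemma only when the characteristic exceeds $r$, in which case the statement is already subsumed by Corollary~\ref{cor:main}, since both triples then satisfy Schur-Weyl duality outright.
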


By the lemma, we may choose $e$ to be any convenient Lie idempotent. In this
section we use a Lie idempotent found by Klyachko.

Given a permutation $\sigma$, an integer $i$ is a descent of $\sigma$ if
$\sigma(i)> \sigma(i+1)$. Let $\CD(\sigma)$ denote the set of descents of
$\sigma$. By definition, the major index of $\sigma$ is
\begin{equation*}
\maj(\sigma)= \sum _{i\in \CD(\sigma)} i.
\end{equation*}
The Klyachko idempotent is the element
\begin{equation*}
\kappa= \frac 1r \sum_{\sigma \in \Sigma_r} \zeta^{\maj(\sigma)} \sigma
\end{equation*}
in $k\Sigma_r$. Klyachko \cite{klyachko:lie} has shown that $\kappa$ is a
Lie idempotent. Furthermore, if $\gamma$ is any fixed $r$-cycle in
$\Sigma_r$ and we define
\begin{equation}\label{eq:define-f}
f= \frac 1r \sum_{i=1}^r \zeta^{-i} \gamma^i
\end{equation}
as in \S\ref{sec:2}, then $\kappa f=f$ and $f\kappa =\kappa$
(see~\cite[\S8.4]{reutenauer:free}). Set
\[
H=fk\Sigma_r f.
\]
Then for any Lie idempotent $e$ we have
\begin{equation}\label{eq:9}
  e k\Sigma_r e \cong \kappa k\Sigma_r \kappa \cong H
  \cong \End_{k\Sigma_r}( \Ind_\Gamma^{\Sigma_r} \zeta)  
\end{equation}
where $\Gamma = \langle \gamma \rangle$ is the subgroup generated by
$\gamma$.

%%%%%%%%%%%%%%%%%%%%%%%%%%%%%%%%%%%%%%%%%%%%%%%%%%%%%%%%%%%%%%%%%%%%%%
\subsection*{The semisimple case}

Now assume that the characteristic of $k$ is greater than $r$, so
$k\Sigma_r$ and $H$ are split, semisimple $k$-algebras, and consider the
commuting algebra $\End_{\GL_n(k)}( L^{n,r})$. By Corollary~\ref{cor:main},
the triple $(k \GL_n(k), L^{n,r}, \kappa k\Sigma_r \kappa)$ satisfies
Schur-Weyl duality and so by Theorem~\ref{thm:main1},
$(k \GL_n(k), T^{n,r}f, H)$ does as well. Note that right multiplication by
$\kappa$ defines a $\GL_n(k)$-equivariant isomorphism between $T^{n,r}f$ and
$L^{n,r}$ that intertwines the right actions of $H$ and
$\kappa k\Sigma_r \kappa$, and that by \eqref{eq:9},
\begin{equation*}
\End_{\GL_n(k)}( L^{n,r}) \cong H \cong \End_{k\Sigma_r}(
\Ind_\Gamma^{\Sigma_r} \zeta).
\end{equation*}
In the following, we consider the algebra $H$ instead of $\End_{\GL_n(k)}(
L^{n,r})$.

Recall that a partition is a sequence $\lambda=(\lambda_1, \lambda_2,
\dots)$ of non-negative integers such that (1) $\lambda_1\geq \lambda_2\geq
\dots$ and (2) $\lambda_i\ne0$ for only finitely many $i$. If $\lambda_i>0$,
then $\lambda_i$ is a part of $\lambda$. Define $\ell(\lambda)$ to be the
number of parts of $\lambda$. If $\sum_{i>0} \lambda_i=r$, then say
$\lambda$ is a partition of $r$ and write $\lambda \vdash r$. When
$\ell(\lambda)=a$ we generally abuse notation and write $\lambda=(\lambda_1,
\dots, \lambda_a)$ instead of $\lambda=(\lambda_1, \dots, \lambda_a, 0,
\dots)$.

For a partition $\lambda$ of $r$ with at most $n$ parts let $V^\lambda$ be
the irreducible representation of $\GL_n(k)$ with highest weight $\lambda$
and let $S^\lambda$ be the Specht module indexed by $\lambda$. For example,
if $\lambda=(r)$, then $V^\lambda$ is the natural module for $\GL_n(k)$ and
$S^\lambda$ is the trivial representation of $\Sigma_r$, and if all the
parts of $\lambda$ are equal to $1$, then $V^\lambda$ is the determinant
representation of $\GL_n(k)$ and $S^\lambda$ is the sign representation of
$\Sigma_r$.  Semisimplicity implies (see e.g.~\cite[Proposition
3.3.2]{goodmanwallach:representations}) that there is an isomorphism of
$(k\GL_n(k), k\Sigma_r)$-bimodules
\begin{equation}\label{eq:5}
  T^{n,r}\cong \bigoplus_{\substack{\lambda \vdash r\\ \ell(\lambda)\leq
      n}} V^\lambda  \otimes S^\lambda .
\end{equation}
Applying the map $\rho_f$ to~\eqref{eq:5} gives
\begin{equation*}\label{eq:5f}
  T^{n,r}f \cong \bigoplus_{\substack{\lambda \vdash r\\ \ell(\lambda)\leq n}}
  V^\lambda \otimes S^\lambda f
\end{equation*}
where $\{\, S^\lambda f \mid S^\lambda f \ne 0\,\}$ is a set of
non-isomorphic, irreducible, right $H$-modules. For a partition $\lambda$ of
$r$ with $S^\lambda f \ne0$, let $H_\lambda$ denote the minimal two-sided
ideal of $H$ with the property that $S^\lambda f H_\lambda \ne 0$.

The decomposition of $k\Sigma_r f$ into irreducible constituents is given in
\cite[Chapter 8]{reutenauer:free} and~\cite{garsia:combinatorics}. This
decomposition determines the algebra structure of $H$ as follows.

Let $\SYT$ denote the set of standard Young tableaux with $r$
boxes. For a partition $\lambda$ of $r$ let $\SYT^\lambda$ be the set
of standard Young tableaux with shape $\lambda$. Suppose $\mathsf{t}$
is a standard Young tableau. An integer $i$ is a descent of
$\mathsf{t}$ if $i+1$ occurs in a lower row of $\mathsf{t}$ than
$i$. Let $\CD(t)$ denote the set of descents of $\mathsf{t}$. The
major index of $\mathsf{t}$ is
\[
\maj(\mathsf{t})= \sum _{i\in \CD(t)} i.
\] 
Define 
\[
\SYT^\lambda_{\scriptscriptstyle \equiv 1}= \{\, \mathsf{t} \in \SYT^\lambda
\mid \maj(\mathsf{t}) \equiv 1 \bmod r\,\}.
\]
Then the multiplicity of $S^\lambda$ in $k\Sigma_r f$ is
$|\SYT^\lambda_{\scriptscriptstyle \equiv 1}|$. Thus,
\begin{enumerate}
\item[(H1)] the simple $H$-modules are parametrized by the set of partitions
  $\lambda$ of $r$ for which $|\SYT^\lambda_{\scriptscriptstyle \equiv 1}|
  \ne \emptyset$,
\item[(H2)] the dimension of $S^\lambda f$ is
  $|\SYT^\lambda_{\scriptscriptstyle \equiv 1}|$,
\item[(H3)] the dimension of $H_\lambda$ is
  $|\SYT^\lambda_{\scriptscriptstyle \equiv 1}|^2$, and
\item[(H4)] $\dim H= \sum_{\lambda\vdash r}
  |\SYT^\lambda_{\scriptscriptstyle \equiv 1}|^2$.
\end{enumerate}

Obviously $|\SYT^\lambda_{\scriptscriptstyle \equiv 1}|$ depends on only the
integer $r$, and not the field $k$, so one might hope that statements
(H1)--(H4) hold whenever the characteristic of $k$ does not divide $r$.

The partitions $\lambda$ such that $S^\lambda f\ne 0$ have been determined
by Klyachko, as follows.

\begin{theorem}\label{thm:k}
  Suppose that $\lambda$ is a partition of $r$. Then there is a standard
  Young tableau with shape $\lambda$ and major index congruent to $1$ mod
  $r$ if and only if $\lambda$ is not equal to $(1,1, \dots,1)$, $(r)$,
  $(2,2)$ (in case $r=4$), or $(2,2,2)$ (in case $r=6$).
\end{theorem}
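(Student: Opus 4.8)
The plan is to prove the equivalent statement that $\abs{\SYT^\lambda_{\scriptscriptstyle\equiv 1}}>0$ exactly for the $\lambda$ in question, where $\SYT^\lambda_{\scriptscriptstyle\equiv 1}=\{\,\mathsf t\in\SYT^\lambda:\maj(\mathsf t)\equiv 1\bmod r\,\}$. The main tool will be Stanley's $q$-analogue of the hook length formula,
\[
F_\lambda(q):=\sum_{\mathsf t\in\SYT^\lambda}q^{\maj(\mathsf t)}=q^{n(\lambda)}\,\frac{[r]_q!}{\prod_{c\in\lambda}[h(c)]_q},\qquad n(\lambda)=\textstyle\sum_i(i-1)\lambda_i,
\]
together with the roots-of-unity filter: for $\zeta$ a primitive $r\th$ root of unity in $\BBC$, $\abs{\SYT^\lambda_{\scriptscriptstyle\equiv 1}}=\tfrac1r\sum_{j=0}^{r-1}\zeta^{-j}F_\lambda(\zeta^j)$. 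Thus the task reduces to evaluating $F_\lambda$ at the various roots of unity.

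I would evaluate $F_\lambda$ at a primitive $d\th$ root of unity $\omega$ (with $d\mid r$) by comparing the exponent with which the $d\th$ cyclotomic polynomial $\Phi_d$ divides the numerator $[r]_q!$, namely $r/d$, with the exponent with which it divides the denominator $\prod_c[h(c)]_q$, namely $\#\{\,c:d\mid h(c)\,\}=(r-\abs{\operatorname{core}_d\lambda})/d$. This yields: $F_\lambda(\omega)=0$ unless the $d$-core of $\lambda$ is empty, in which case $\abs{F_\lambda(\omega)}$ equals the number $f^{\lambda^{[d]}}$ of standard tableaux of the $d$-quotient $\lambda^{[d]}$. The dominant summand of the filter is the one at $\zeta^0=1$, equal to $F_\lambda(1)=f^\lambda$. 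Hence, if $\operatorname{core}_d\lambda\ne\emptyset$ for every divisor $d>1$ of $r$ — the generic situation — then all the other summands vanish, so $\abs{\SYT^\lambda_{\scriptscriptstyle\equiv 1}}=f^\lambda/r$, which, being a count, is automatically a positive integer. (Equivalently, the bookkeeping may be run through the Witt/Murnaghan--Nakayama identity $r\abs{\SYT^\lambda_{\scriptscriptstyle\equiv 1}}=\sum_{d\mid r}\mu(d)\chi^\lambda_{(d^{r/d})}$, the term indexed by $d$ vanishing unless $\operatorname{core}_d\lambda=\emptyset$.) So only the ``special'' $\lambda$ — those with $\operatorname{core}_d\lambda=\emptyset$ for some $d>1$ dividing $r$ — remain.

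For a hook $\lambda=(r-k,1^k)$ the formula collapses to $F_\lambda(q)=q^{\binom{k+1}2}\binom{r-1}{k}_q$; as every coefficient of a Gaussian binomial coefficient is positive, the major indices realized form the full integer interval $\bigl[\binom{k+1}2,\binom{k+1}2+k(r-1-k)\bigr]$, of length $k(r-1-k)$, which is $\ge r-1$ for $2\le k\le r-3$ and equals $r-2$ for $k\in\{1,r-2\}$. Thus a proper hook automatically realizes every residue class modulo $r$ except possibly when $k\in\{1,r-2\}$, and a short residue computation settles those two cases affirmatively; meanwhile $k=0$ gives $F_\lambda=1$ and $k=r-1$ gives $F_\lambda=q^{\binom r2}$, whose sole exponents $0$ and $\binom r2$ are never $\equiv 1\bmod r$ once $r\ge 3$ — this accounts for the exceptions $(r)$ and $(1^r)$. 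The crux is the remaining case: non-hook $\lambda$ with $\operatorname{core}_d\lambda=\emptyset$ for some $d$ with $1<d<r$ (the value $d=r$ being impossible, since an empty $r$-core forces $\lambda$ to be a rim hook, hence a hook). Here one must rule out cancellation in $r\abs{\SYT^\lambda_{\scriptscriptstyle\equiv 1}}=f^\lambda+\sum_{1<d\mid r}\mu(d)\chi^\lambda_{(d^{r/d})}$, each $\chi^\lambda_{(d^{r/d})}$ being $0$ or $\pm f^{\lambda^{[d]}}$; the natural route is to bound $f^{\lambda^{[d]}}$ against $f^\lambda$ using the factorisation of $\prod_{c\in\lambda}h(c)$ as $d^{r/d}$ times the product of the hook lengths of the $d$-quotient times the product of those hook lengths of $\lambda$ that are not divisible by $d$, which reduces the problem to a finite list of shapes where $f^\lambda$ just fails to dominate. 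Working through that list, the only genuine failures turn out to be $\lambda=(2,2)$ (for $r=4$) and $\lambda=(2,2,2)$ (for $r=6$), which one verifies immediately from $F_{(2^m)}(q)=q^{m(m-1)}C_m(q)$ with $C_m(q)=\frac{1}{[m+1]_q}\binom{2m}{m}_q$ the $q$-Catalan polynomial, whose support avoids the residue class $1\bmod 2m$ precisely for $m\in\{2,3\}$.

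I expect this last step — controlling the non-hook shapes that admit a nontrivial empty $d$-core — to be the main obstacle. The generic case and the hook case are essentially bookkeeping, but excluding cancellation in general seems to demand either sharp inequalities between $f^\lambda$ and the quotient-counts $f^{\lambda^{[d]}}$ or an explicit classification of the partitions of $r$ that are simultaneously tileable by $d$-rim hooks for one or more divisors $d\mid r$; the survival of exactly two sporadic exceptions shows that no purely soft argument can close the gap.
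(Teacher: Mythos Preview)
The paper does not prove this theorem: it is stated as a result of Klyachko, with a citation to \cite{klyachko:lie}, and no argument is given. So there is no proof in the paper to compare against; I can only assess your outline on its own terms.

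Your treatment of the hook case via $F_{(r-k,1^k)}(q)=q^{\binom{k+1}{2}}\binom{r-1}{k}_q$ and the positivity of Gaussian binomial coefficients is correct, and the residue checks for $k\in\{0,1,r-2,r-1\}$ go through as you indicate. Likewise, when $\operatorname{core}_d\lambda\ne\emptyset$ for every divisor $d>1$ of $r$, the identity $r\,\abs{\SYT^\lambda_{\scriptscriptstyle\equiv1}}=\sum_{d\mid r}\mu(d)\chi^\lambda_{(d^{r/d})}$ does collapse to $f^\lambda>0$.

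The genuine gap is exactly where you place it: non-hook $\lambda$ with $\operatorname{core}_d\lambda=\emptyset$ for some proper divisor $d$ of $r$. Two remarks. First, this case is not a thin residual class but typically the bulk of the problem: for $r=4$ \emph{every} partition has empty $2$-core, and for any composite $r$ the partitions with empty $p$-core for the smallest prime $p\mid r$ already include most shapes. So your ``generic'' case buys less than the wording suggests. Second, your proposed inequality $f^\lambda>\sum_{1<d\mid r}\abs{\chi^\lambda_{(d^{r/d})}}$ cannot hold uniformly with slack, since for $\lambda=(2,2)$ one has $f^\lambda=2=\chi^\lambda_{(2,2)}$ and the sum is exactly $f^\lambda$; the same happens at $(2,2,2)$. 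Thus any argument along these lines must classify the near-equality cases rather than appeal to a clean bound, and you have not indicated how that classification would be carried out. As written, then, this is a reasonable plan with the decisive step missing; a more direct route---explicitly constructing, for each non-exceptional $\lambda$, a standard tableau with $\maj\equiv1\pmod r$---may in fact be easier than controlling the character sums.
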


We can use the Robinson-Schensted correspondence between $\Sigma_r$ and the
set of pairs of standard Young tableaux with the same shape to obtain
formulas for the dimensions of $H_\lambda$ and $H$ in terms of permutations
instead of tableaux. Let $P\colon \Sigma_r\to \SYT$ be the map given by the
Schensted (row) insertion algorithm. Then the Robinson-Schensted
correspondence is given by the assignment $\sigma\mapsto (P(\sigma),
P(\sigma\inverse))$.

\begin{proposition}
  Suppose that the characteristic of $k$ is greater than $r$ and that $k$
  contains a primitive $r\th$ root of unity. Then
  \[
  \dim H_\lambda= |\{\,\sigma\in \Sigma_r \mid P(\sigma)\in \SYT^\lambda,\
  \maj(\sigma) \equiv 1 \bmod r,\ \maj(\sigma\inverse) \equiv 1 \bmod r\,\}|
  \]
  and
  \[
  \dim H= |\{\,\sigma\in \Sigma_r \mid \maj(\sigma) \equiv 1 \bmod r,\
  \maj(\sigma\inverse) \equiv 1 \bmod r\,\}|.
  \]
  Thus, the dimension of $H$ is the number of permutations $\sigma$ such
  that $\sigma$ and $\sigma\inverse$ both have major index congruent to $1$
  modulo $r$.
\end{proposition}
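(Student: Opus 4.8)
The plan is to translate each of the tableau-counting statements (H3) and (H4) into a statement about permutations using the Robinson–Schensted correspondence, exploiting its two key properties: it is a bijection $\sigma \mapsto (P(\sigma), P(\sigma^{-1}))$ between $\Sigma_r$ and pairs of standard Young tableaux of the same shape, and the descent set of $\sigma$ equals the descent set of the recording tableau $P(\sigma^{-1})$ (equivalently, the descent set of $\sigma^{-1}$ equals the descent set of $P(\sigma)$). The latter is the classical compatibility of RSK with descents, so in particular $\maj(\sigma^{-1}) = \maj(P(\sigma))$ and $\maj(\sigma) = \maj(P(\sigma^{-1}))$.

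First I would handle $\dim H_\lambda$. By (H3), $\dim H_\lambda = |\SYT^\lambda_{\scriptscriptstyle \equiv 1}|^2$, which counts ordered pairs $(\mathsf{s},\mathsf{t})$ of standard Young tableaux of shape $\lambda$ with $\maj(\mathsf{s}) \equiv \maj(\mathsf{t}) \equiv 1 \bmod r$. Under the Robinson–Schensted correspondence, such pairs are in bijection with permutations $\sigma$ satisfying $P(\sigma) \in \SYT^\lambda$, together with the conditions $\maj(P(\sigma)) \equiv 1$ and $\maj(P(\sigma^{-1})) \equiv 1$ modulo $r$. Using the descent-compatibility to rewrite $\maj(P(\sigma)) = \maj(\sigma^{-1})$ and $\maj(P(\sigma^{-1})) = \maj(\sigma)$ yields exactly the claimed formula for $\dim H_\lambda$. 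Summing over all $\lambda \vdash r$ and using (H4) together with the fact that $\{P(\sigma) \in \SYT^\lambda : \lambda \vdash r\}$ partitions $\Sigma_r$ by the shape of the insertion tableau gives the formula for $\dim H$; the final sentence is then just a restatement.

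The only subtlety worth flagging is the precise form of the descent compatibility: one must be careful about whether it is $P(\sigma)$ or $P(\sigma^{-1})$ that records the descents of $\sigma$ versus $\sigma^{-1}$, and the convention (row insertion, descent of a tableau meaning $i+1$ in a strictly lower row) must be matched against the standard references. This is a bookkeeping point rather than a genuine obstacle, since both $\maj$-conditions appearing in the statement are symmetric in $\sigma$ and $\sigma^{-1}$, so even a swapped convention produces the same counting formula after reindexing by $\sigma \mapsto \sigma^{-1}$. In short, there is no real obstacle here: the proposition is a direct consequence of (H3), (H4), and the classical descent-preserving property of RSK, and I would present it as such in two or three sentences.
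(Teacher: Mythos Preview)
Your proposal is correct and is essentially identical to the paper's own proof: both compute the preimage of $\SYT^\lambda_{\scriptscriptstyle \equiv 1} \times \SYT^\lambda_{\scriptscriptstyle \equiv 1}$ under the Robinson--Schensted bijection $\sigma \mapsto (P(\sigma), P(\sigma^{-1}))$, invoke the standard fact $\CD(\sigma) = \CD(P(\sigma^{-1}))$ to convert tableau major indices into permutation major indices, and then appeal to (H3) and (H4). Your remark about the symmetry in $\sigma \leftrightarrow \sigma^{-1}$ making the convention harmless is a nice extra observation not made explicit in the paper.
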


\begin{proof}
  Clearly, the preimage of $\SYT^ \lambda_{ \scriptscriptstyle \equiv 1}
  \times \SYT^ \lambda_{ \scriptscriptstyle \equiv 1}$ in $\Sigma_r$ is
  \[
  \{\,\sigma\in \Sigma_r \mid P(\sigma)\in \SYT^\lambda,\ \maj(P(\sigma))
  \equiv 1 \bmod r,\ \maj(P(\sigma\inverse)) \equiv 1 \bmod r\,\} .
  \]
  A standard property of the row insertion algorithm is that $\CD(\sigma)=
  \CD(P(\sigma\inverse))$ and so $\maj (\sigma)= \maj
  (P(\sigma\inverse))$. Thus, the preimage of
  $\SYT^\lambda_{\scriptscriptstyle \equiv 1} \times
  \SYT^\lambda_{\scriptscriptstyle \equiv 1}$ is the set of permutations
  $\sigma$ such that $P(\sigma)$ has shape $\lambda$, $\maj(\sigma) \equiv 1
  \bmod r$, and $\maj(\sigma\inverse) \equiv 1 \bmod r$. Thus the
  proposition follows (H3) and (H4).
\end{proof}

%%%%%%%%%%%%%%%%%%%%%%%%%%%%%%%%%%%%%%%%%%%%%%%%%%%%%%%%%%%%%%%%%%%%%%
\subsection*{The non-semisimple case}

Now we return to the general situation, assuming only that the
characteristic of $k$ does not divide $r$, so $T^{n,r}$ is not necessarily
completely reducible. Our goal is to characterize when the triple
$(k\GL_n(k), L^{n,r},e k\Sigma_r e)$ satisfies Schur-Weyl duality in terms
of certain permutation representations of $\Sigma_r$. We continue to use the
idempotent $f\in k\Gamma$ defined in \eqref{eq:define-f}.

It was shown in Theorem~\ref{thm:main1} that the triple $(k\GL_n(k),
L^{n,r}, \kappa k\Sigma_r \kappa)$ satisfies Schur-Weyl duality if and only
if the restriction homomorphism
\[
\Theta_f \colon \End_{k\Sigma_r} (T^{n,r})\to \End_{H}(T^{n,r}f)
\]
is surjective. To streamline the notation, set $\Theta=\Theta_f$.  To find
conditions under which $\Theta$ is surjective we need to consider some
standard notation and constructions.

Denote the set $\{1, 2, \dots, n\}$ simply by $[n]$. The group $\Sigma_r$
acts on the set $[n]^r$ on the right by $(a\sigma)_j= a_{\sigma(j)}$ for $a$
in $[n]^r$ and $\sigma$ in $\Sigma_r$.  Let $\{v_1, \dots, v_r\}$ be the
standard basis of $V$. For $a=(a_1, \dots, a_r)$ in $[n]^r$ define
\[
v_a=v_{a_1} \otimes \dotsm \otimes v_{a_r}
\]
in $T^{n,r}$. Then
\[
\CB=\{\, v_a\mid a\in [n]^r\}
\]
is a $k$-basis of $T^{n,r}$. Clearly $v_a\sigma= v_{a\sigma}$ for $a$ in
$[n]^r$ and $\sigma$ in $\Sigma_r$, so $\CB$ is a $\Sigma_r$-stable subset
of $T^{n,r}$.

Technically, an element $a$ in $[n]^r$ is a function $a:[r]\to [n]$. In
particular, if $\sigma$ is in $\Sigma_r$, then $a\circ \sigma\colon [r]\to
[n]$. Thus, the right action of $\Sigma_r$ on $[n]^r$ is simply the natural
right action of $\Sigma_r$ of the set of functions $[r]\to [n]$ given by
$(a,\sigma) \mapsto a\circ \sigma$. Similarly, the group $\Sigma_n$ acts
naturally on $[n]^r$ on the left. Namely, if $a=(a_1, a_2, \dots, a_r)$ is
in $[n]^r$ and $\tau$ is in $\Sigma_n$, then
\begin{equation*}
  \tau( (a_1, a_2, \dots, a_r)) = (\tau(a_1), \tau(a_2), \dots, \tau(a_r)).
\end{equation*}
Clearly the left $\Sigma_n$-action and the right $\Sigma_r$-action commute.

For $a$ in $[n]^r$, define the content of $a$ to be the $n$-tuple
\[
\ct(a)= (|a\inverse(1)|, |a\inverse(2)|, \dots, |a\inverse(n)|).
\]
Then $\ct(a) =(m_1, m_2, \dots, m_n)$, where $m_j$ is the multiplicity of
$j$ in $a$. It is easy to see that $\ct$ is an orbit map for the right
action of $\Sigma_r$ on $[n]^r$.  In other words, $\ct(a)=\ct(b)$ if and
only if there is a $\sigma$ in $\Sigma_r$ such that $b=a\sigma$. Define
$\Lambda(n,r)$ to be the image of $\ct$. Then
\[
\Lambda(n,r)= \{\, (m_1, m_2, \dots, m_n)\in \BBN^n \mid m_1+m_2+\dots+
m_n=r\,\},
\]
and so $\Lambda(n,r)$ is the set of compositions of $r$ into at most $n$
parts, with parts of length zero allowed. It is well-known and
straightforward to check that $\Lambda(n,r)$ may be identified with the set
of weights of the group of diagonal matrices in $\GL_n(k)$ acting on the
space $T^{n,r}$. Elements of $\Lambda(n,r)$ are thus referred to as
``weights.'' In the following, we fix $n$ and $r$ and set
$\Lambda = \Lambda(n,r)$.

For a weight $\alpha$ in $\Lambda$, define
\begin{equation*}
[n]^r_\alpha= \ct\inverse(\alpha)= \{\, a\in [n]^r\mid \ct(a)=\alpha\,\},
\end{equation*}
and define the weight space $T^{n,r}_\alpha$ of $T^{n,r}$ by
\begin{equation*}
T^{n,r}_\alpha= \spn \{\, v_a\in \CB \mid a\in [n]^r_\alpha\,\} =\spn \{\,
v_a\in \CB \mid \ct(a) = \alpha\,\}.
\end{equation*}
Because $\ct(a\sigma)= \ct(a)$ for all $a\in [n]^r$ and $\sigma\in
\Sigma_r$, it follows that for each weight $\alpha$, $T^{n,r}_\alpha$ is a
right $\Sigma_r$-submodule of $T^{n,r}$ and that
\begin{equation}\label{eq:de}
  T^{n,r} \cong \bigoplus_{\alpha\in \Lambda} T^{n,r}_\alpha
\end{equation}
as right $\Sigma_r$-modules.

Now consider the subspace $T^{n,r}f$ of $T^{n,r}$. Because the left action
of $\GL_n(k)$ on $T^{n,r}$ commutes with the right action of $\Sigma_r$, it
follows that $T^{n,r}f$ is a $\GL_n(k)$-stable subspace of $T^{n,r}$, and
hence that for $\alpha\in \Lambda$, the $\alpha$ weight space of $T^{n,r}f$
is equal to $T^{n,r}_\alpha f$. Then $T^{n,r}_\alpha f$ is an $H$-submodule of
$T^{n,r}f$ and
\begin{equation}\label{eq:df}
  T^{n,r}f \cong   \bigoplus_{\alpha\in \Lambda} T^{n,r}_\alpha f  
\end{equation}
as right $H$-modules.

Recall that $\Theta\colon \End_{\Sigma_r}(T^{n,r} )\to \End_{H}(T^{n,r}f)$
is given by $\Theta(\varphi) = \bar \varphi$, where $\bar \varphi\colon
T^{n,r} \to T^{n,r} f$ is the restriction of $\varphi$. The
decompositions~\eqref{eq:de} and~\eqref{eq:df} induce isomorphisms of
$k$-vector spaces
\begin{equation}\label{eq:deE}
  \End_{\Sigma_r}(T^{n,r}) \cong \bigoplus_{\alpha, \beta \in \Lambda}
  \Hom_{\Sigma_r}( T^{n,r}_\alpha, T^{n,r}_\beta)
\end{equation}
and
\begin{equation}\label{eq:deEf}
  \End_{H}(T^{n,r}f) \cong \bigoplus_{\alpha, \beta \in \Lambda}
  \Hom_{H}( T^{n,r}_\alpha f, T^{n,r}_\beta f).
\end{equation}
Suppose $\alpha$ and $\beta$ are in $\Lambda$. If $\psi$ is in
$\Hom_{\Sigma_r}( T^{n,r}_\alpha, T^{n,r}_\beta)$, then $\psi(T^{n,r}_\alpha
f)\subseteq T^{n,r}_\beta f$ and the restriction of $\psi$ to
$T^{n,r}_\alpha$ is in $\Hom_{H}( T^{n,r}_\alpha f, T^{n,r}_\beta
f)$. Define
\[
\Theta^\alpha_\beta \colon \Hom_{\Sigma_r}(T^{n,r}_\alpha, T^{n,r}_\beta
)\to \Hom_{H}(T^{n,r}_\alpha f, T^{n,r}_\beta f) \quad \text{by}\quad
\Theta^\alpha_\beta(\psi) = \bar \psi,
\]
where $\bar \psi\colon T^{n,r}_\alpha f \to T^{n,r}_\beta f$ is the
restriction of $\psi$. The maps $\Theta$ and $\Theta^\alpha_\beta$ are
compatible with the decompositions \eqref{eq:deE} and \eqref{eq:deEf} in the
sense that the diagram
\begin{equation*}
\begin{gathered} % to vertically center the eq no
\xymatrix{ \End_{\Sigma_r}(T^{n,r}) \ar[r]^-{\cong} \ar[d]^-{\Theta} &
  \bigoplus_{\alpha, \beta \in \Lambda} \Hom_{\Sigma_r}( T^{n,r}_\alpha,
  T^{n,r}_\beta) \ar[d]^-{\bigoplus
    \Theta^\alpha_\beta} \\
  \End_{H}(T^{n,r}f) \ar[r]^-{\cong}& \bigoplus_{\alpha, \beta \in \Lambda}
  \Hom_{H}( T^{n,r}_\alpha f, T^{n,r}_\beta f)}
\end{gathered}
\end{equation*}
commutes. Therefore, $\Theta$ is surjective if and only if
$\Theta^\alpha_\beta$ is surjective for all $\alpha$ and $\beta$ in
$\Lambda$. The next proposition thus follows from Lemma~\ref{lem:li},
Lemma~\ref{lem:2}, and Theorem~\ref{thm:main1}.

\begin{proposition}\label{pro:wtspc}
  Suppose $e$ is a Lie idempotent. Then $(k\GL_n(k), L^{n,r}, e k\Sigma_r
  e)$ satisfies Schur-Weyl duality if and only if the maps
  \[
  \Theta^\alpha_\beta\colon \Hom_{\Sigma_r}( T^{n,r}_\alpha, T^{n,r}_\beta)
  \to \Hom_{H}( T^{n,r}_\alpha f, T^{n,r}_\beta f)
  \]
  are surjections for all $\alpha$ and $\beta$ in $\Lambda(n,r)$.
\end{proposition}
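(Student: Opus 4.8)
\emph{Proof proposal.} The plan is to read the proposition off from the reductions already carried out in this section, together with the commutativity of the diagram displayed just above; in effect nothing is left to do but to chain the cited results.

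First I would pass to the Klyachko idempotent. By Lemma~\ref{lem:li} the validity of Schur--Weyl duality for $(k\GL_n(k), L^{n,r}, e k\Sigma_r e)$ is independent of the choice of Lie idempotent $e$, so it suffices to treat $e=\kappa$. Since $\kappa f=f$, $f\kappa=\kappa$, and $(k\GL_n(k), T^{n,r}, k\Sigma_r)$ satisfies classical Schur--Weyl duality (recall $|k|>r$ throughout this section), Theorem~\ref{thm:main1} applies with its two idempotents taken to be $\kappa$ and $f$: it gives that $(k\GL_n(k), L^{n,r}, \kappa k\Sigma_r\kappa)$ satisfies Schur--Weyl duality if and only if $(k\GL_n(k), T^{n,r}f, H)$ does, and by Lemma~\ref{lem:2} the latter is equivalent to the assertion that every $H$-linear endomorphism of $T^{n,r}f$ extends to a $k\Sigma_r$-linear endomorphism of $T^{n,r}$ --- that is, to the surjectivity of $\Theta\colon \End_{\Sigma_r}(T^{n,r})\to\End_H(T^{n,r}f)$.

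Next I would split $\Theta$ across weight spaces. Right multiplication by $f$ commutes with the left $\GL_n(k)$-action, hence with the action of the diagonal matrices, so it preserves the weight-space decomposition; therefore the $\alpha$-weight space of $T^{n,r}f$ is $T^{n,r}_\alpha f$, and one has the decompositions \eqref{eq:de} and \eqref{eq:df} as right $\Sigma_r$- and $H$-modules respectively, and hence the identifications \eqref{eq:deE} and \eqref{eq:deEf}. Under these, $\Theta$ corresponds to $\bigoplus_{\alpha,\beta}\Theta^\alpha_\beta$, which is precisely the content of the commutative square preceding the statement; so $\Theta$ is surjective if and only if each $\Theta^\alpha_\beta$ is surjective, for all $\alpha,\beta\in\Lambda(n,r)$. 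Together with the previous paragraph, this is exactly the proposition.

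I do not expect a genuine obstacle: the substantive work has already been done in Theorem~\ref{thm:main1} and in the weight-space analysis of $T^{n,r}$. The one point I would write out carefully is the commutativity of that square, i.e.\ that decomposing a $\Sigma_r$-endomorphism of $T^{n,r}$ into its $(\alpha,\beta)$-blocks and then restricting to $T^{n,r}f$ yields the same map as restricting first and then decomposing via \eqref{eq:df}. This comes down to the facts that the projections $T^{n,r}\to T^{n,r}_\alpha$ are $\Sigma_r$-equivariant (the content map is constant on $\Sigma_r$-orbits) and commute with right multiplication by $f$ --- the very observation used above to identify the weight spaces of $T^{n,r}f$.
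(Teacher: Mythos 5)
Your argument is correct and follows the paper's own route exactly: reduce to $e=\kappa$ via Lemma~\ref{lem:li}, apply Theorem~\ref{thm:main1} and Lemma~\ref{lem:2} with the pair of idempotents $\kappa$, $f$ to convert the question into surjectivity of $\Theta=\Theta_f$, and then decompose $\Theta$ into the blocks $\Theta^\alpha_\beta$ using the weight-space decompositions \eqref{eq:de} and \eqref{eq:df}. The point you flag for careful writing (compatibility of the block decomposition with restriction to $T^{n,r}f$) is precisely the commutative square the paper records, so nothing is missing.
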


Next, suppose that $\alpha$ is a weight in $\Lambda$. Up to the left action
of $\Sigma_n$, we may assume that
$\alpha=(m_1, m_2, \dots, m_p, 0, \dots, 0)$ where $(m_1,m_2, \dots, m_p)$
is a composition of $r$ with no parts that equal zero.  Let
\[
\Sigma_\alpha \cong \Sigma_{m_1}\times \Sigma_{m_2} \times \dots \times
\Sigma_{m_p}
\]
be the corresponding Young subgroup of $\Sigma_r$. The transitive action of
$\Sigma_r$ on $[n]^r_\alpha$ induces an isomorphism of right
$k\Sigma_r$-modules $T^{n,r}_\alpha \cong k_{\alpha} \otimes_{k
  \Sigma_\alpha} k\Sigma_r$, where $k_{\alpha}$ is the trivial right
$k\Sigma_\alpha$-module, as follows.

Given $a=(a_1, a_2, \dots, a_r)$ in $[n]^r_\alpha$, replace the occurrences
of $1$ in $a$ from left to right with $1,2, \dots, m_1$, then replace the
occurrences of $2$ in $a$ from left to right by $m_1+1, m_1+2, \dots,
m_1+m_2$, and so on. Define $\sigma_a$ to be the permutation given in one
line notation by the resulting $r$-tuple. For example if $r=8$,
$\alpha=(4,2,2, 0, \dots, 0)$, and $b=(2,1,1,3,2,1,1,3)$, then in one line
notation $\sigma_b= (5, 1,2, 7,6, 3,4, 8)$; that is, $\sigma_b(1)=5$,
$\sigma_b(2)=1$, and so on. It is easy to see that the assignment $a\mapsto
\sigma_a$ defines a bijection between $[n]^r_\alpha$ and the set
$\Sigma^\alpha$ of minimal length right coset representatives of
$\Sigma_\alpha$ in $\Sigma_r$, and that the assignment $a\mapsto
\Sigma_\alpha \sigma_a$ defines a $\Sigma_r$-equivariant bijection between
$[n]^r_\alpha$ and the set of right cosets $\Sigma_\alpha\backslash
\Sigma_r$. Thus, the assignment $v_a\mapsto 1\otimes \sigma_a$ defines an
isomorphism of right $k\Sigma_r$-modules
\begin{equation*}
h_\alpha\colon T^{n,r}_\alpha\xrightarrow{\ \cong\ } k_{\alpha}
\otimes_{k\Sigma_\alpha} k\Sigma_r.
\end{equation*}
To simplify the notation, set $M^\alpha= k_{\alpha} \otimes_{k\Sigma_\alpha}
k\Sigma_r$.

Now suppose that $\alpha, \beta\in \Lambda$. The assignment $\varphi\mapsto
h_\beta \varphi h_\alpha\inverse$ defines isomorphisms of $k$-vector spaces
\[
\Hom_{\Sigma_r}(T^{n,r}_\alpha, T^{n,r}_\beta) \xrightarrow{\ \cong \ }
\Hom_{\Sigma_r}(M^\alpha, M^\beta)
\]
and
\[
\Hom_{H}(T^{n,r}_\alpha f, T^{n,r}_\beta f) \xrightarrow{\ \cong \ }
\Hom_{H}(M^\alpha f, M^\beta f),
\]
such that the diagram
\begin{equation*}
\begin{gathered} % to force vertical centering of eq no
\xymatrix{ \Hom_{\Sigma_r}(T^{n,r}_\alpha , T^{n,r}_\beta )
  \ar[r]^-{\Theta^\alpha_\beta} \ar[d]_{\cong} & \Hom_{H}(T^{n,r}_\alpha f,
  T^{n,r}_\beta  f) \ar[d]_{\cong}\\
  \Hom_{\Sigma_r}(M^\alpha, M^\beta) \ar[r]^-{\theta^\alpha_\beta} &
  \Hom_{H}(M^\alpha f, M^\beta f) }
\end{gathered}
\end{equation*}
commutes, where the map $\theta^\alpha_\beta$ on the bottom is again given
by restriction. Obviously $\theta^\alpha_\beta$ is surjective if and only if
$\theta^\alpha_\beta$ is. Combining this observation with
Proposition~\ref{pro:wtspc} we obtain the following corollary.

\begin{corollary}
  Suppose $e$ is a Lie idempotent. Then $(k\GL_n(k), L^{n,r}, e k\Sigma_r
  e)$ satisfies Schur-Weyl duality if and only if the restriction maps
  \[
  \theta^\alpha_\beta\colon \Hom_{\Sigma_r}( M^\alpha, M^\beta) \to
  \Hom_{H}( M^\alpha f, M^\beta f)
  \]
  are surjections for all $\alpha$ and $\beta$ in $\Lambda(n,r)$.
\end{corollary}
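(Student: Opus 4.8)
The plan is to deduce the corollary directly from Proposition~\ref{pro:wtspc} by transporting each map $\Theta^\alpha_\beta$ across the isomorphisms $h_\alpha\colon T^{n,r}_\alpha \xrightarrow{\cong} M^\alpha$ of right $k\Sigma_r$-modules constructed above. The first step is to observe that, since $h_\alpha$ is $k\Sigma_r$-linear and $f$ lies in $k\Sigma_r$, the map $h_\alpha$ carries $T^{n,r}_\alpha f$ isomorphically onto $M^\alpha f$; and because the algebra $H=fk\Sigma_r f$ acts on both subspaces by right multiplication inside $k\Sigma_r$, this restricted bijection $T^{n,r}_\alpha f\xrightarrow{\cong} M^\alpha f$ is in fact an isomorphism of right $H$-modules. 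Hence conjugation $\varphi\mapsto h_\beta\varphi h_\alpha\inverse$ gives the $k$-linear isomorphisms $\Hom_{\Sigma_r}(T^{n,r}_\alpha,T^{n,r}_\beta)\xrightarrow{\cong}\Hom_{\Sigma_r}(M^\alpha,M^\beta)$ and $\Hom_{H}(T^{n,r}_\alpha f,T^{n,r}_\beta f)\xrightarrow{\cong}\Hom_{H}(M^\alpha f,M^\beta f)$ recorded in the display preceding the corollary.

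The second step is to verify that these two vertical isomorphisms intertwine $\Theta^\alpha_\beta$ with the restriction map $\theta^\alpha_\beta$, i.e.\ that the square shown just before the statement commutes. This is a matter of unwinding definitions: $h_\alpha\inverse$ sends $M^\alpha f$ into $T^{n,r}_\alpha f$, any $\varphi\in\Hom_{\Sigma_r}(T^{n,r}_\alpha,T^{n,r}_\beta)$ sends $T^{n,r}_\alpha f$ into $T^{n,r}_\beta f$, and $h_\beta$ then sends $T^{n,r}_\beta f$ into $M^\beta f$; so the restriction of $h_\beta\varphi h_\alpha\inverse$ to $M^\alpha f$ equals the conjugate of $\varphi|_{T^{n,r}_\alpha f}$ by the restrictions of $h_\beta$ and $h_\alpha$. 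Since a map sitting over an isomorphism is surjective precisely when the map it sits over is, $\Theta^\alpha_\beta$ is surjective if and only if $\theta^\alpha_\beta$ is.

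Combining the two steps with Proposition~\ref{pro:wtspc} finishes the argument: the triple $(k\GL_n(k),L^{n,r},ek\Sigma_r e)$ satisfies Schur-Weyl duality if and only if every $\Theta^\alpha_\beta$, $\alpha,\beta\in\Lambda(n,r)$, is surjective, which by the previous paragraph holds if and only if every $\theta^\alpha_\beta$ is surjective. The argument is entirely formal once the isomorphisms $h_\alpha$ are available, so there is no substantive obstacle; the only point deserving a moment's care is the first one --- that $h_\alpha$ genuinely restricts to an $H$-module isomorphism on the $f$-twisted weight spaces --- and, as noted, this is immediate from $k\Sigma_r$-linearity of $h_\alpha$ together with $f\in k\Sigma_r$.
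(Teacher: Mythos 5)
Your proposal is correct and follows exactly the paper's route: transport each $\Theta^\alpha_\beta$ across the conjugation isomorphisms induced by the $k\Sigma_r$-module isomorphisms $h_\alpha\colon T^{n,r}_\alpha\to M^\alpha$ (which, since $f\in k\Sigma_r$, restrict to $H$-module isomorphisms $T^{n,r}_\alpha f\cong M^\alpha f$), observe the resulting square commutes so surjectivity of $\Theta^\alpha_\beta$ and $\theta^\alpha_\beta$ are equivalent, and conclude by Proposition~\ref{pro:wtspc}. Your extra care in checking that $h_\alpha$ restricts to an $H$-isomorphism on the $f$-twisted weight spaces is exactly the (implicit) content of the paper's commuting diagram, so there is nothing to add.
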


Thus we arrive at the following problem.

\begin{problem}\label{prob:p}
  Find a combinatorially defined basis for $\Hom_H(M^\alpha f, M^\beta f)$
  and hence show that $\dim \Hom_H(M^\alpha f, M^\beta f)$ does not depend
  on the field $k$. \qed
\end{problem}

A solution to this problem should show that $(k\GL_n(k), L^{n,r}, e
k\Sigma_r e)$ satisfies Schur-Weyl duality whenever $k$ is a field of
characteristic not dividing $r$ that contains a primitive $r\th$ root of
unity.

\medskip

\noindent {\bf Acknowledgments:} This project was initiated in academic year
2012--13, when both authors were visiting Ulam Professors at the Department
of Mathematics at the University of Colorado Boulder; the authors are
grateful to the Department of Mathematics (especially Richard Green and Nat
Thiem) for their hospitality and support. We also thank Nantel Bergeron and
Dan Nakano for helpful discussions.

%%%%%%%%%%%%%%%%%%%%%%%%%%%%%%%%%%%%%%%%%%%%%%%%%%%%%%%%%%%%%%%%%%%%%%
%%%%%%%%%%%%% bibliography
%%%%%%%%%%%%%%%%%%%%%%%%%%%%%%%%%%%%%%%%%%%%%%%%%%%%%%%%%%%%%%%%%%%%%%
\bibliographystyle{amsplain}

\begin{thebibliography}{10}

\bibitem{arnold:cohomology}
V.I. Arnold.
\newblock The cohomology ring of the group of dyed braids.
\newblock {\em Mat. Zametki}, 5:227--231, 1969.

\bibitem{bensondoty:schur}
D.J. Benson and S.R. Doty.
\newblock Schur-{W}eyl duality over finite fields.
\newblock {\em Arch. Math. (Basel)}, 93(5):425--435, 2009.

\bibitem{curtisreiner:methodsI}
C.W. Curtis and I.~Reiner.
\newblock {\em Methods of representation theory. {V}ol. {I}}.
\newblock John Wiley \& Sons Inc., New York, 1981.

\bibitem{douglasspfeifferroehrle:cohomology}
J.M. Douglass, G.~Pfeiffer, and G.~R{\"o}hrle.
\newblock Cohomology of {C}oxeter arrangements and {S}olomon's descent algebra.
\newblock {\em Trans. Amer. Math. Soc.}, 366(10):5379--5407, 2014.

\bibitem{garsia:combinatorics}
A.M. Garsia.
\newblock Combinatorics of the free {L}ie algebra and the symmetric group.
\newblock In {\em Analysis, et cetera}, pages 309--382. Academic Press, Boston,
  MA, 1990.

\bibitem{goodmanwallach:representations}
R.~Goodman and N.R. Wallach.
\newblock {\em Representations and invariants of the classical groups},
  volume~68 of {\em Encyclopedia of Mathematics and its Applications}.
\newblock Cambridge University Press, Cambridge, 1998.

\bibitem{green:polynomial}
J.A. Green.
\newblock {\em Polynomial representations of {${\rm GL}_{n}$}}, volume 830 of
  {\em Lecture Notes in Mathematics}.
\newblock Springer, Berlin, augmented edition, 2007.
\newblock With an appendix on Schensted correspondence and Littelmann paths by
  K. Erdmann, Green and M. Schocker.

\bibitem{james:irreducible}
G.D. James.
\newblock The irreducible representations of the symmetric groups.
\newblock {\em Bull. London Math. Soc.}, 8(3):229--232, 1976.

\bibitem{klyachko:lie}
A.A. Klyachko.
\newblock Lie elements in the tensor algebra.
\newblock {\em Siberian Math. J.}, 15:914--921, 1974.

\bibitem{lehrersolomon:symmetric}
G.I. Lehrer and L.~Solomon.
\newblock On the action of the symmetric group on the cohomology of the
  complement of its reflecting hyperplanes.
\newblock {\em J. Algebra}, 104(2):410--424, 1986.

\bibitem{reutenauer:free}
C.~Reutenauer.
\newblock {\em Free {L}ie algebras}, volume~7 of {\em London Mathematical
  Society Monographs, New Series}.
\newblock The Clarendon Press, Oxford University Press, New York, 1993.

\end{thebibliography}

\end{document}